\numberwithin{equation}{section}
\theoremstyle{plain}
\newtheorem{theorem}{Theorem}[section]
\newtheorem{corollary}[theorem]{Corollary}
\newtheorem{lemma}[theorem]{Lemma}
\newtheorem{proposition}[theorem]{Proposition}
\titleformat{\section}{\normalfont\scshape\center}{\thesection}{1em}{}
\titleformat{\subsection}{\normalfont\scshape}{\thesubsection}{1em}{}
\titleformat{\subsubsection}{\normalfont\scshape}{\thesubsubsection}{1em}{}
\theoremstyle{definition}
\newtheorem{definition}[theorem]{Definition}
\theoremstyle{remark}
\newtheorem{remark}[theorem]{Remark}
\newcommand{\N}{\mathbb{N}}
\newcommand{\R}{\mathbb{R}}
\newcommand{\ind}[1]{\mathbf{1}_{\{#1\}}}
\DeclareMathOperator{\E}{\mathbf{E}}
\renewcommand{\P}{\mathbf{P}}
\DeclareMathOperator{\Var}{\mathbf{V}\mathrm{ar}}
\newcommand{\e}{\mathbf{e}}
\renewcommand{\bar}[1]{\overline{#1}}
\renewcommand{\tilde}[1]{\widetilde{#1}}
\renewcommand{\epsilon}{\varepsilon}
\renewcommand{\phi}{\varphi}
\renewcommand{\d}{\mathrm{d}}
\title{A $N$-branching random walk with random selection}
\author{Aser Cortines\thanks{Supported by the Israeli Science Foundation grant 1723/14} 
\ and \ Bastien Mallein\thanks{DMA, ENS}}
\date{\today}
\renewcommand{\e}{\mathrm{e}}
\newcommand{\dx}{\mathrm{d}x}
\newcommand{\eq}{\mathrm{eq}}
\newcommand{\egaldistr}{\overset{(d)}{=}}
\begin{document}

\maketitle

\begin{abstract}
We consider an exactly solvable model of branching random walk with random selection, which describes the evolution of a population with $N$ individuals on the real line. At each time step, every individual reproduces independently, and its offspring are positioned around its current locations. Among all children, $N$ individuals are sampled at random without replacement to form the next generation, such that an individual at position $x$ is chosen with probability proportional to $\e^{\beta x}$. We compute the asymptotic speed and the genealogical behavior of the system. 
\end{abstract}

\section{Introduction}
\label{sec:introduction}
In a general sense, a \emph{branching-selection particle system} is a Markovian process of particles on the real line evolving through the repeated application of the two steps:
\begin{description}
  \item[\emph{Branching step:}] every individual currently alive in the system splits into new particles, with positions (with respect to their birth place) given by independent copies of a point process.
  \item[\emph{Selection step:}] some of the new-born individuals are selected to reproduce at the next \emph{branching step}, while the other particles are ``killed".
\end{description}
We will often see the particles as individuals and their positions as their \emph{fitness}, that is, their score of adaptation to the environment.
From a biological perspective, branching-selection particle systems model the competition between individuals in an environment with limited resources. 

\begin{figure}[h]
\hspace{\stretch{1}}%
\subfigure[Branching step]{
\begin{tikzpicture}
  \draw [->] (0,0.5)--(0,-2);
  \draw (-0.5,0) node[left] {$t$} -- (4,0);
  \draw (-0.5,-1.5) node[left] {$t+1$} -- (4,-1.5);

  \draw [thick, color=magenta] (0.8,0) node{$\bullet$};
  \draw [thick, color=magenta] (3,-1.5) node{$\bullet$} -- (0.8,0) -- (0.9,-1.5) node{$\bullet$};
  \draw [thick, color=magenta] (0.2,-1.5) node{$\bullet$} -- (0.8,0);

  \draw [thick, color=green] (1.2,0) node{$\bullet$};
  \draw [thick, color=green] (2.5,-1.5) node{$\bullet$} -- (1.2,0) -- (1.4,-1.5) node{$\bullet$};
  \draw [thick, color=green] (0.4,-1.5) node{$\bullet$} -- (1.2,0);

  \draw [thick, color=red] (1.9,0) node{$\bullet$};
  \draw [thick, color=red] (3.2,-1.5) node{$\bullet$} -- (1.9,0) -- (2.2,-1.5) node{$\bullet$};
  \draw [thick, color=red] (1.8,-1.5) node{$\bullet$} -- (1.9,0);

  \draw [thick, color=blue] (2.8,0) node{$\bullet$};
  \draw [thick, color=blue] (3.7,-1.5) node{$\bullet$} -- (2.8,0) -- (2.6,-1.5) node{$\bullet$};
  \draw [thick, color=blue] (1.6,-1.5) node{$\bullet$} -- (2.8,0);
\end{tikzpicture}
}%
\hspace{\stretch{1}}%
\subfigure[Selection step]{
\begin{tikzpicture}
  \draw [densely dashed, color=magenta!30] (0.2,-1.5) node{$\bullet$} -- (0.8,0) -- (0.9,-1.5) node{$\bullet$};
  \draw [densely dashed, color=green!30] (2.5,-1.5) node{$\bullet$} -- (1.2,0) -- (1.4,-1.5) node{$\bullet$};
  \draw [densely dashed, color=green!30] (0.4,-1.5) node{$\bullet$} -- (1.2,0);
  \draw [densely dashed, color=red!30] (1.8,-1.5) node{$\bullet$} -- (1.9,0) -- (2.2,-1.5) node{$\bullet$};
  \draw [densely dashed, color=blue!30] (3.7,-1.5) node{$\bullet$} -- (2.8,0);

  \draw [->] (0,0.5)--(0,-2);
  \draw (-0.5,0) node[left] {$t$} -- (4,0);
  \draw (-0.5,-1.5) node[left] {$t+1$} -- (4,-1.5);

  \draw [thick, color=magenta] (0.8,0) node{$\bullet$};
  \draw [thick, color=magenta] (3,-1.5) node{$\bullet$} -- (0.8,0);
  \draw [thick, color=green] (1.2,0) node{$\bullet$};
  \draw [thick, color=red] (1.9,0) node{$\bullet$};
  \draw [thick, color=red] (3.2,-1.5) node{$\bullet$} -- (1.9,0);  
  \draw [thick, color=blue] (2.8,0) node{$\bullet$};
  \draw [very thick, color=blue] (1.6,-1.5) node{$\bullet$} -- (2.8,0) -- (2.6,-1.5) node{$\bullet$};
\end{tikzpicture}
}%
\hspace{\stretch{1}}%
\caption{One time step of a branching-selection particle system}
\end{figure}
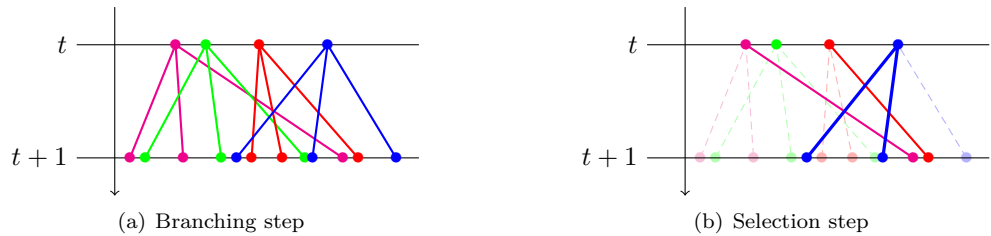

These models are of physical interest \cite{BD97,BDMM07} and can be related to reaction-diffusion phenomena and the F-KPP equation. Different methods can be used to select the individuals. For example, one can consider an \emph{absorbing barrier}, below which particles are killed \cite{AiJ11,BBS13,Mal15a,Pai16}. Another example is the case where only the $N$ rightmost individuals are chosen to survive \cite{BD97,BDMM07,BeG,DuR}, the so-called ``$N$-branching random walk". In this paper, we introduce a new selection mechanism, in which the individuals are randomly selected with probability depending on their positions.

Based on numerical simulations \cite{BD97} and the study of solvable models \cite{BDMM07}, it has been predicted that the dynamical and structural aspects of many branching selection particle systems satisfy universal properties. For example, The cloud of particles travels at speed $v_N$, which converges to a limit $v$ as the size of the population $N$ diverges. It has been conjectured \cite{BDMM07} that 
\[
v_N -v = - \phi \left(\log N +3\log \log N + o(\log \log N) \right)^{-2} 
\quad \text{as $N \to \infty$,}
\] 
for an explicit constant $\phi$ depending on the law of reproduction. 

Some of these conjectures have been recently proved for the $N$-branching random walk \cite{BeM,BeG,CoG, DuR,Mai,Mal15b}. B\'erard and Gou\'er\'e \cite{BeG} prove that $v_N -v$ behaves like $-\phi(\log N)^{-2}$. Nevertheless, several conjectures about this process remain open, such as the asymptotic behavior of the genealogy or the second-order expansion of the speed. 
Other examples in which the finite-size correction to the speed of a branching-selection particle system is explicitly computed can be found in \cite{BeM,CoC15,CoG,DuR,Mal15b,MMQ11}. 

To study the genealogical structure of such models we define the \emph{ancestral partition process} $\Pi^N_n(t)$ of a population choosing $n \ll N$ individuals from a given generation $T$ and tracing back their genealogical linages. That is, $\Pi^N_n(t)$ is a process in $\mathcal{P}_n$ the set of partitions (or equivalence classes) of $[n] := \{ 1,\ldots, n \}$ such that $i$ and $j$ belong to the same equivalence class if the individuals $i$ and $j$ have a common ancestor $t$ generations backwards in time. Notice that the direction of time is the opposite of the direction of time for the natural evolution of the population, that is, $t=0$ is the current generation, $t=1$ brings us one generation backward in time and so on. 

It has also been conjectured \cite{BDMM07} that the genealogical trees of branching selection particle systems converge to those of a Bolthausen-Sznitman coalescent and that the average coalescence times scale like a power of the logarithm of the population size. These conjectures contrast with classical results in neutral population models, such as Wright-Fisher and Moran's models, that lay in the Kingman coalescent universality class \cite{MoS01}. Mathematically, these conjectures are difficult to be verified and they have only been proved for some particular models \cite{BBS13,Cor14}.

We define in this article a solvable model of branching selection particle system evolving in discrete time, and compute its asymptotic speed as well as its genealogical structure. Given $N \in \N$ and $\beta > 1$, it consists in a population with a fixed number $N$ of individuals. At each time step, the individuals die giving birth to offspring that are positioned according to independent Poisson point processes with intensity $\e^{-x} \mathrm{d}x$ (that we write PPP($\e^{-x} \mathrm{d}x$) for short). Then, $N$ individuals are sampled (without replacement) to form the next generation, such that a child at the position $x$ is sampled with probability proportional to $\e^{\beta x}$. 

To describe the model we introduce the following notation. Let $X^N_0(1),\ldots, X^N_0(N) \in \R$ be the initial position of the particles and $\{ \mathcal{P}_t(j), j \leq N, t \in \N \}$ be a family of i.i.d. PPP($\e^{-x} \mathrm{d}x$). Given $t \geq 1$ and $X^N_{t-1}(1),\ldots, X^N_{t-1}(N)$ the $N$ positions at time $t-1$, we define the new positions as follows:
\begin{enumerate}[i.]
  \item Each individual $X_{t-1}^N(j)$ gives birth to infinitely many children that are positioned according to the point process $X^N_{t-1}(j) + \mathcal{P}_t(j)$. 
  Let $\Delta_t :=(\Delta_t(k) ;\: k \in \N)$ be the sequence obtained by all positions ranked decreasingly, that is
  \[
    \left( \Delta_t(k), k \in \N \right) = \mathrm{R}\mathrm{ank} \left(\left\{ X^N_{t-1}(j) + p ;\: p \in \mathcal{P}_t(j), j \leq N \right\} \right).
  \]
  \item We sample successively $N$ individuals $X_{t}^N(1), \ldots, X_{t}^N(N)$ composing the $t$th generation from $\{\Delta_t(1), \Delta_t(2), \ldots\}$ such that for all $i \in \{1, 2, \ldots, N\}$:
\begin{equation}\label{equa.def.N,b,BRW}
\P\left( \left. X^N_t(i) = \Delta_t(j) \right| \Delta_t , X^N_t(1),\ldots, X^N_t(i-1)\right)
 =  \frac{\e^{\beta \Delta_t(j)} \ind{\Delta_t(j) \not \in \{ X^N_t(1),\ldots X^N_t(i-1)\}}}
{\sum_{k=1}^{+\infty} \e^{\beta \Delta_t(k)} - \sum_{k=1}^{i-1} \e^{\beta X^N_t(k)}}.
\end{equation}
\end{enumerate}
To keep track of the genealogy of the process we define 
  \begin{equation}
    \label{eqn:definitionAN}
    A^N_t(i) = j \quad \text{if} \quad X^N_t(i) \in \left\{ X^N_{t-1}(j) + p, p \in \mathcal{P}_t(j)\right\},
  \end{equation}
that is, $A_n(i)=j$ if $X^N_t(i)$ is an offspring of $X^N_{t-1}(j)$. We call this system the $(N,\beta)$-\emph{branching random walk} or $(N,\beta)$-BRW for short. 

It can be checked that the sum in the denominator of \eqref{equa.def.N,b,BRW} is finite if $\beta \in (1,\infty)$ and that it diverges as $\beta \to 1$ (see Proposition~\ref{prop:def} below), thus the model is only defined for $\beta \in (1,\infty)$. Notice that as $\beta \to \infty$ the sum in the denominator is dominated by the high values of 
$\Delta_t$. Precisely, it can be checked that the following limits hold a.s.
\[
\textstyle
\lim_{\beta \to \infty} \e^{-\beta \Delta_t(1)}\sum_{k=1}^{+\infty} \e^{\beta \Delta_t(k)} = 1 ,
\quad
\lim_{\beta \to \infty} \e^{-\beta \Delta_t(2)}\sum_{k=2}^{+\infty} \e^{\beta \Delta_t(k)} = 1 ,
\quad \text{and so on.}
\] 
Therefore, the case ``$\beta = \infty$" is the ``exponential model" from \cite{BD97,BD12,BDMM07}, in which the $N$ rightmost individuals are selected to form the next generation. In contrast with the examples already treated in the literature, when $\beta < \infty$ one does not necessarily select the rightmost offspring. In this paper, we will take interest in the dynamical and genealogical aspects of the $(N,\beta)$-BRW, showing that it travels at a deterministic speed and that its genealogical trees converge in distribution. The next result concerns the speed of the $(N,\beta)$-BRW.

\begin{theorem}
\label{thm:main_vel}
For all $N \in \N$ and $\beta \in (1,\infty]$, there exists $v_{N,\beta}$ such that
\begin{equation}
\label{equa:lem:speed} 
\lim_{t \to +\infty} \frac{\max_{j \leq N} X^N_t(j)}{t} =\lim_{n \to +\infty} \frac{\min_{j \leq N} X^N_t(j)}{t} = v_{N,\beta} \qquad a.s.
\end{equation}
moreover, $v_{N,\beta} = \log \log N + o(1)$ as $N\to \infty$.
\end{theorem}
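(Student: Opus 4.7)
The plan is to exploit a scale invariance of the $(N,\beta)$-BRW that turns the logarithm of the total exponential mass into a random walk with i.i.d.\ increments. Set $M_t := \log \sum_{i=1}^N \e^{X^N_t(i)}$. By the superposition property of Poisson point processes, the conditional intensity of $\Delta_t$ given $X^N_{t-1}$ is $\e^{M_{t-1}-x}\,\dx$, so one can realise $\Delta_t = M_{t-1} + \tilde{\Delta}_t$ for an independent PPP $\tilde{\Delta}_t$ of intensity $\e^{-x}\,\dx$. Plugging this representation into \eqref{equa.def.N,b,BRW}, the factor $\e^{\beta M_{t-1}}$ cancels between numerator and denominator, so the selection is translation-invariant and gives $X^N_t(i) = M_{t-1} + \tilde{X}^N_t(i)$, where $(\tilde{X}^N_t)_{t \geq 1}$ is an i.i.d.\ sequence of $\R^N$-valued vectors built from the standard PPPs $\tilde{\Delta}_t$ by the sampling procedure of \eqref{equa.def.N,b,BRW}. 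Setting $\tilde{S}_t := \sum_{i=1}^N \e^{\tilde{X}^N_t(i)}$ then yields $M_t = M_{t-1} + \log \tilde{S}_t$, so $M_t$ is a random walk with i.i.d.\ increments and the strong law of large numbers gives $M_t/t \to v_{N,\beta} := \E[\log \tilde{S}_1]$ a.s., provided $\log \tilde{S}_1 \in L^1$.

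To transfer this to the extrema, I would write $\max_{j \leq N} X^N_t(j) = M_{t-1} + \max_{j \leq N} \tilde{X}^N_t(j)$ and similarly for the minimum, and reduce \eqref{equa:lem:speed} to showing that both $\max_j \tilde{X}^N_1(j)$ and $\min_j \tilde{X}^N_1(j)$ lie in $L^1$, since an i.i.d.\ $L^1$ sequence $(Y_t)$ satisfies $Y_t/t \to 0$ a.s.\ by a standard Borel--Cantelli argument. The upper bound $\max_j \tilde{X}^N_1(j) \leq \max_k \tilde{\Delta}_1(k)$ has Gumbel tails; for the lower tail, the expected weight of atoms below level $y$ is $\int_{-\infty}^y \e^{(\beta-1)x}\,\dx = \e^{(\beta-1)y}/(\beta-1)$, which decays exponentially as $y \to -\infty$ since $\beta > 1$, so the conditional probability of sampling any atom below $y$ decays fast enough to guarantee integrability of $\min_j \tilde{X}^N_1(j)$.

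For the asymptotic $v_{N,\beta} = \log\log N + o(1)$, I would use the classical representation $\tilde{\Delta}_t(k) = -\log(E_1 + \cdots + E_k)$ in terms of i.i.d.\ $\mathrm{Exp}(1)$ variables $E_j$, so that the weighted mass of the top $N$ atoms is
\[
\sum_{k=1}^N \e^{\tilde{\Delta}_t(k)} = \sum_{k=1}^N \frac{1}{E_1 + \cdots + E_k} = \log N + O(1) \quad \text{a.s.}
\]
The plan is to show that $\tilde{S}_1$ concentrates around $\log N$ with probability tending to $1$: an upper bound because the $N$ selected atoms are, with high probability, confined to the top $N^{O(1)}$ of the PPP; a lower bound because a uniformly positive proportion of the top $N$ atoms is selected, since their weights $\e^{\beta \tilde{\Delta}(k)}$ dominate the denominator in \eqref{equa.def.N,b,BRW}. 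A uniform-integrability argument then upgrades this to $\E[\log \tilde{S}_1] = \log\log N + o(1)$.

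The main obstacle lies in the $o(1)$ control when $\beta$ is close to $1$: the total weight $\sum_k \e^{\beta \tilde{\Delta}(k)}$ in the denominator of the sampling probability is only a positive $1/\beta$-stable random variable, with infinite mean and very heavy tails. Showing that the weighted-without-replacement selection nevertheless concentrates on the top $O(N)$ atoms with the required quantitative precision seems to require either a coupling with a more tractable scheme (e.g.\ Bernoulli thinning of the PPP, or a size-biased Palm construction) or sharp stable-law estimates that are uniform in $N$, in order to upgrade qualitative bounds into the quantitative $o(1)$ needed here.
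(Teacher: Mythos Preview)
Your argument for the existence of the speed is essentially the paper's: your $M_t$ is the paper's $X^N_t(\mathrm{eq})$, and the i.i.d.\ increment structure plus the $L^1$ bounds on $\max_j \tilde X^N_1(j)$ and $\min_j \tilde X^N_1(j)$ is exactly Lemma~\ref{lem:speed}. Nothing to add there.

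For the asymptotic $v_{N,\beta}=\log\log N+o(1)$ your plan diverges from the paper, and the obstacle you flag is real: for finite $\beta$ the selected atoms are \emph{not} the top $N$ of the PPP, so the harmonic-sum computation $\sum_{k\le N}(E_1+\cdots+E_k)^{-1}\sim\log N$ only gives $\tilde S_1$ when $\beta=\infty$. For $\beta<\infty$ you would need to show that the weighted-without-replacement sample nevertheless captures a fraction $\sim 1$ of that harmonic mass, and your sketch (``a uniformly positive proportion of the top $N$ atoms is selected'') does not yield this; a positive proportion would only give $\log\tilde S_1=\log\log N+O(1)$, not $o(1)$. The heavy $1/\beta$-stable tail of the denominator is exactly what prevents a naive concentration argument from closing this gap.

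The paper resolves this by recognising that the selection rule \eqref{equa.def.N,b,BRW} is \emph{precisely} a size-biased pick from $(U_k)=(\e^{\beta\tilde\Delta(k)}/L)$, and that $(U_k)$ is a $\mathrm{PD}(1/\beta,0)$ sequence (Proposition~\ref{prop:PD-PPP}). Hence $\e^{\beta\tilde X^N_1(j)}/L=V_j$ with $(V_j)$ the size-biased pick, so that
\[
\tilde S_1=L^{1/\beta}\sum_{j=1}^N V_j^{1/\beta}.
\]
This is the ``more tractable scheme'' you were looking for: the partial sums $\Sigma_N=\sum_{j\le N}V_j^{1/\beta}$ are analysed directly in Lemma~\ref{lem:mainbege}, which gives $\Sigma_N/\log N\to\Psi_{1/\beta}M_\infty^{1/\beta}$ a.s.\ and in $L^1$, together with a quantitative lower-tail bound $\P(\Sigma_N\le u\log N)\le D\,u^{\gamma\beta}$ that furnishes the uniform integrability of $\log(\Sigma_N/\log N)$. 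The final step is the identification $\Psi_{1/\beta}M_\infty^{1/\beta}=L^{-1/\beta}$ (Corollary~\ref{cor:identification}), so that $\E[\log\Sigma_N]-\log\log N\to -\tfrac1\beta\E[\log L]$ cancels exactly against the contribution of $L^{1/\beta}$, giving the $o(1)$. The case $\beta=\infty$ is handled separately by a Laplace-transform computation, where your harmonic-sum heuristic is in fact the right picture.
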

 
The main result of the paper is the following theorem concerning the convergence in law of the ancestral partition process $\big(\Pi^N_n(t); t \in \N \big)$ of the $(N,\beta)$-BRW.

\begin{theorem}
\label{thm:main_coal}
For all $N \in \N$ and $\beta \in (1,\infty]$, let $c_N$ be the probability that two individuals uniformly chosen at random have a common ancestor one generation backwards in time. Then, we have $ \lim_{N \to \infty} c_N \log N =1$ and the rescaled coalescent process $\big(\Pi^N( \lfloor t/c_N \rfloor ), t \geq 0 \big)$ converges in distribution toward the \emph{Bolthausen-Sznitman coalescent}.
\end{theorem}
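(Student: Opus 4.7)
The plan is to exploit the Poisson point process structure of the reproduction law to reduce the genealogy to a paintbox chain, and then identify the limit via the standard $\Lambda$-coalescent rates. Specifically, since the $\mathcal{P}_t(j)$ are i.i.d.\ PPP$(\e^{-x}\dx)$, the superposition and marking theorems give that the combined offspring process at generation $t$ is itself a Poisson point process of intensity $\e^{-x} S_{t-1}\dx$ with $S_{t-1} := \sum_j \e^{X^N_{t-1}(j)}$; each atom carries an independent parent-mark equal to $j$ with probability $q_j^{(t-1)} := \e^{X^N_{t-1}(j)}/S_{t-1}$, independently of the atom's position. Since the selection rule \eqref{equa.def.N,b,BRW} depends only on positions, the parents $(A^N_t(i))_{i \leq N}$ of the $N$ selected children are, conditional on $X^N_{t-1}$, i.i.d.\ samples from $(q_j^{(t-1)})_{j \leq N}$. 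Thus, conditional on $(X^N_s)_{s\in\Z}$, the ancestral partition process $(\Pi^N_n(t))$ evolves as a time-inhomogeneous paintbox chain driven by the random weights $(q_j^{(-t-1)})$.

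The heart of the argument is the analysis of the equilibrium law of $(q^{(t)})$. After the change of variable $u = \e^{\beta x}$, the offspring positions in the shifted coordinates become a Poisson point process of intensity $(1/\beta) u^{-1-1/\beta}\d u$ on $(0,\infty)$, whose normalized atoms form a Poisson--Dirichlet distribution $\mathrm{PD}(1/\beta, 0)$. The Markov chain on $(q^{(t)})$ thus consists of selecting the first $N$ atoms in size-biased order and then renormalizing. Using the Griffiths--Engen--McCloskey representation of $\mathrm{PD}(1/\beta, 0)$, one should establish that $c_N = \E[\sum_j (q_j^{(t)})^2] \sim 1/\log N$ in equilibrium, together with the intensity asymptotics: for every continuous $\phi$ with compact support in $(0,1)$,
\[
c_N^{-1}\,\E\left[\sum_{j=1}^N \phi(q_j^{(t)})\right] \longrightarrow \int_0^1 \phi(x)\, x^{-2}\,\d x.
\]

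The final step extracts the coalescent rates. By the paintbox structure, the probability that a prescribed set of $k \leq n$ blocks merges in one generation, while the remaining $n-k$ blocks stay distinct, equals (up to $o(c_N)$ corrections) $\E[\sum_j (q_j^{(t)})^k (1-q_j^{(t)})^{n-k}]$. Multiplying by $c_N^{-1}$ and using the intensity asymptotics above identifies the limiting rate as $\int_0^1 x^{k-2}(1-x)^{n-k}\,\d x$, which is precisely the $k$-merger rate of the Bolthausen--Sznitman coalescent (the $\Lambda$-coalescent with $\Lambda(\d x)=\d x$). Combining with tightness (automatic on the compact space $\calP_n$) and the Markov property yields the weak convergence stated in the theorem.

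The main obstacle is the sharp equilibrium analysis. An exact invariant law for the chain $(q^{(t)})$ seems out of reach; the likely route is to couple with an infinite-$N$ auxiliary process whose weights can be read off directly from a Poisson point process (generalizing the analysis available for the exponential case $\beta=\infty$), and to control the $1/\log N$ scaling through fine estimates of how the truncation at $N$ atoms propagates through the size-biased selection step.
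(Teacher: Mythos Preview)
Your paintbox reduction is correct and matches the paper's Lemma~\ref{lem:coal}. But you have overlooked the structural feature that makes the model solvable: the parent-weights $(q^{(t)})_{t \geq 1}$ are not merely a Markov chain with some hard-to-find invariant law---they are \emph{i.i.d.}\ across time with an explicit distribution. This follows from the superposition property you already invoked: the shifted offspring process $\big(\Delta_{t+1}(k) - X^N_t(\mathrm{eq})\big)_k$ is a fresh $\mathrm{PPP}(\e^{-x}\dx)$ independent of the past (Proposition~\ref{prop:def}), so the selected positions $X^N_{t+1}(j) - X^N_t(\mathrm{eq})$, and hence the ratios $q_j^{(t+1)} = \e^{X^N_{t+1}(j)}/\sum_i \e^{X^N_{t+1}(i)}$, have a fixed law for every $t$. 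Concretely (Proposition~\ref{prop:def2}), if $(V_j)$ is the size-biased pick from $\mathrm{PD}(1/\beta,0)$ then $q_j^{(t)} \overset{(d)}{=} V_j^{1/\beta}\big/\sum_{i=1}^N V_i^{1/\beta}$. Note the $1/\beta$-th power: the parent-weights involve $\e^X$ whereas the selection and the $\mathrm{PD}$ structure live on the scale $\e^{\beta X}$, so your description ``select $N$ atoms in size-biased order and renormalize'' is off by this exponent. The ``equilibrium analysis'' you flag as the main obstacle therefore evaporates, and the coupling with an infinite-$N$ auxiliary process you propose is unnecessary.

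From this explicit law the paper proceeds by tail estimates on the ordered weights---showing $(\log N)\,\P\big(\theta^N_{(1)} > x\big) \to (1-x)/x$ and $(\log N)\,\E\big[\theta^N_{(2)}\big] \to 0$ (Lemmas~\ref{lem:thetaestimate}--\ref{lem:secondMaximum})---and then applies a M\"ohle--Sagitov-type criterion (Lemma~\ref{lem:criteria:lambda:coalescent}). Your intensity-measure route would need the same non-trivial input, namely the lower-tail bound $\P\big(\Sigma_N \leq u\log N\big) \leq C u^{\gamma}$ for $\Sigma_N = \sum_{j=1}^N V_j^{1/\beta}$ (Lemma~\ref{lem:mainbege}), which is where the scaling $c_N \sim 1/\log N$ actually comes from. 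The case $\beta = \infty$ is handled separately via the exponential representation of Proposition~\ref{prop:betainfini}.
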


The Bolthausen-Sznitman coalescent in Theorem~\ref{thm:main_coal} can be roughly explained by an individual going far ahead of the rest of the population, so that its offspring are more likely to be selected and overrun the next generation. Based on precise asymptotic of the coalescence time, the authors in \cite{BDMM07} argue that the genealogical trees of the exponential model converge to the Bolthausen-Sznitman coalescent and conjecture that this behavior should be expected for a large class of models. The $(N,\beta)$-BRW can be though as a finite temperature version of the exponential model from \cite{BDMM07}. In this sense, Theorem~\ref{thm:main_coal} attests for the robustness of their conjectures showing that even under weaker selection constrains this convergence occurs. It indicates that whenever the rightmost particles are likely to be selected, then the Bolthausen-Sznitman coalescent is to be expected.

Different coalescent behavior should be expected when the selection mechanism does not favor the rightmost particles, the classical example being the Wright-Fisher model. Another example can be obtained modifying the selection mechanism of the $(N,\beta)$-BRW. It can be checked using the techniques developed in this paper (see for example Theorem \ref{thm:mainbege}) that if we systematically eliminate the first individual sampled $X_1^N(t)$, so that it does not reproduce in the next generation, then this new branching-selection particle system lays in the Kingman's coalescent universality class. Notice that this new selection procedure no longer favors the rightmost particles (in this case, the rightmost particle), which justifies this change of behavior.

\paragraph*{Notation.} In this article, we write 
\begin{multline*}
 f(x) \sim g(x) \quad \text{as $x \to a$ if} \quad \lim_{x \to a} \frac{f(x)}{g(x)}=1;
 \qquad f(x) = o(g(x)) \text{ as $x \to a$ if} \quad \lim_{x \to a} \frac{f(x)}{g(x)}=0;\\
 \text{and} \qquad 
 f(x) = \mathcal{O}(g(x)) \text{ as $x \to a$ if} \quad \limsup_{x \to a} \frac{f(x)}{g(x)} < +\infty.\\
 \end{multline*}

\subsection*{Preliminary results}
\label{sec:generalproperties}
In this section, we prove that the $(N,\beta)$-BRW is well defined and provide some elementary properties such as the existence of the speed $v_{N,\beta}$.

\begin{proposition}
\label{prop:def}
The $(N,\beta)$-BRW is well-defined for all $N \in \N$ and $\beta\in (1,\infty]$.
Moreover, setting $X^N_t(\eq) := \log \sum_{j=1}^N \e^{X^N_t(j)}$, 
the sequence $\big(\sum_{k \in \N} \delta_{\Delta_k(t+1)-X^N_t(\eq)} :\: t \in \N \big)$ is an i.i.d. family of Poisson point processes with intensity measure $\e^{-x} \mathrm{d}x$.
\end{proposition}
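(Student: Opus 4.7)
The plan is to proceed in two stages: first establish well-definedness by controlling the denominator in (1.1), then derive the displacement invariance by a direct Poisson computation.

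For well-definedness, I would start from the superposition principle for Poisson point processes. Shifting a PPP of intensity $\e^{-x}\dx$ by $X^N_{t-1}(j)$ yields a PPP of intensity $\e^{X^N_{t-1}(j)}\e^{-x}\dx$, and the $\mathcal{P}_t(j)$ being independent, the union $\bigcup_j (X^N_{t-1}(j) + \mathcal{P}_t(j))$ is, conditionally on $\mathcal{F}_{t-1}$, a PPP with intensity $\e^{X^N_{t-1}(\eq)}\e^{-x}\dx$. Because $\int_a^{\infty}\e^{-x}\dx<\infty$, this process has only finitely many atoms above any threshold, so the decreasing ranking $(\Delta_t(k))_{k\in\N}$ is a.s.\ well-defined and tends to $-\infty$. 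To check that the denominator of \eqref{equa.def.N,b,BRW} is a.s.\ finite (and positive) for $\beta\in(1,\infty)$, I would apply Campbell's formula to compute, conditionally on $\mathcal{F}_{t-1}$,
\[
\E\Bigl[\exp\bigl(-\lambda\textstyle\sum_{k}\e^{\beta(\Delta_t(k)-X^N_{t-1}(\eq))}\bigr)\Bigm|\mathcal{F}_{t-1}\Bigr]
=\exp\Bigl(-\int_\R(1-\e^{-\lambda\e^{\beta x}})\e^{-x}\dx\Bigr),
\]
and observe that the integrand behaves like $\e^{-x}$ near $+\infty$ and like $\lambda\e^{(\beta-1)x}$ near $-\infty$, so the integral converges precisely when $\beta>1$. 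This gives a.s.\ finiteness of $\sum_{k}\e^{\beta\Delta_t(k)}$ and shows that it diverges in the limit $\beta\to 1$. Positivity of the denominator at each selection step $i$ follows because removing finitely many atoms from an infinite PPP leaves infinitely many, hence a strictly positive $\beta$-exponential sum. The case $\beta=\infty$ is handled separately and is immediate: the $N$ rightmost atoms exist a.s.\ and the selection reduces to choosing them.

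For the second assertion, the computation above already identifies the conditional law of the centered point process. The change of variable $y=x-X^N_{t-1}(\eq)$ in the intensity $\e^{X^N_{t-1}(\eq)}\e^{-x}\dx$ produces exactly $\e^{-y}\dy$, so conditionally on $\mathcal{F}_{t-1}$, the recentered Poisson point process $\sum_k \delta_{\Delta_{t+1}(k)-X^N_t(\eq)}$ is a standard PPP($\e^{-x}\dx$). Since this conditional law does not depend on $\mathcal{F}_{t-1}$, the centered process is independent of $\mathcal{F}_{t-1}$; iterating over $t$ yields the claimed i.i.d.\ family. The whole construction is then carried out inductively in $t$: assuming $X^N_{t-1}$ is well-defined, the step above produces $\Delta_t$ together with a well-defined selection mechanism, hence $X^N_t$.

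I expect no serious obstacle here: the main point is to justify carefully the conditional superposition argument (so that we may use the intensity $\e^{X^N_{t-1}(\eq)}\e^{-x}\dx$ even though $X^N_{t-1}$ is random) and the Campbell integrability criterion. Both are routine manipulations with Poisson point processes, and the cleanest way to handle them is via Laplace functionals, as above.
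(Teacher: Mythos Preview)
Your proposal is correct and follows essentially the same route as the paper: superposition of the shifted Poisson point processes to obtain a single PPP with intensity $\e^{X^N_{t-1}(\eq)}\e^{-x}\dx$, a Campbell-type computation to show $\sum_k \e^{\beta \Delta_t(k)}<\infty$ a.s.\ for $\beta>1$, and the change of variable $y=x-X^N_{t-1}(\eq)$ to identify the recentered process as a PPP($\e^{-x}\dx$) independent of the past. The only cosmetic difference is that the paper bounds the sum via a direct first-moment estimate (splitting into finitely many atoms above $0$ and $\E\sum \e^{\beta\Delta_k}\ind{\Delta_k<0}<\infty$), whereas you use the Laplace functional; both are equivalent Poisson computations.
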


\begin{proof}
With $N $ and $\beta$ fixed, assume that the process has been constructed up to time $t$ with $X^N_t(1), \ldots, X^N_t(N)$ denoting the positions of the $N$ particles. Thanks to the invariance of superposition of independent PPP, $\big\{ X^N_{t}(j) + p ;\: p \in \mathcal{P}_t(j), j \leq N \big\}$ is also a PPP with intensity~measure 
\[\textstyle
\sum_{i=1}^N \e^{-(x-X_t(i))}\dx = \e^{-(x-X^N_t(\eq))}\dx.
\]
Therefore, with probability one: all points have multiplicity one, the sequence $(\Delta_k(t+1);\; k \in \N)$ is uniquely defined. Since there are finitely many points $\Delta_k(t+1)$ that are positive and $\E \big( \sum \e^{\beta \Delta_k(t+1)} \mathbf{1}_{\{ \Delta_k(t+1) <0 \}} \big) < \infty$ we have that $\sum \e^{\beta \Delta_k(t+1)} < \infty$ a.s. As a consequence, the selection step is well-defined, proving the first claim. Moreover, $(\Delta_k(t+1)-X_t(\eq);\; k \in \N)$ is a PPP($\e^{-x}\dx$) independent from the $t$ first steps of the $(N,\beta)$-BRW, proving the second claim.
\end{proof}

\begin{remark}
It is convenient to think $X^N_t(\eq)$ as an ``equivalent position'' of the front at time $t$, in the sense that the particles positions in the $(t+1)$th generation are distributed as if they were generated by a unique individual positioned at $X^N_t(\eq)$.
\end{remark}

We use Proposition~\ref{prop:def} to prove the existence of the \emph{speed} $v_{N,\beta}$, the study of its asymptotic behavior is postponed to Section~\ref{sec:piy}.

\begin{lemma}
\label{lem:speed}
With the notation of the previous proposition, \eqref{equa:lem:speed} in Theorem~\ref{thm:main_vel} holds with
\[v_{N,\beta} := \E(X^N_1(\eq)-X^N_0(\eq)).\]
\end{lemma}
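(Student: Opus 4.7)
By the second statement of Proposition~\ref{prop:def}, the shifted point process $\sum_k \delta_{\Delta_k(t)-X^N_{t-1}(\eq)}$ is a PPP($\e^{-x}\dx$) independent of $\calF_{t-1} := \sigma(X^N_s(j);\: s \leq t-1,\: j \leq N)$, and the selection step involves only this point process together with an independent source of extra randomness. Consequently, the shifted configurations $Y_t := (X^N_t(i) - X^N_{t-1}(\eq))_{i \leq N}$ form an i.i.d.\ sequence, and in particular so do the increments $\xi_t := X^N_t(\eq)-X^N_{t-1}(\eq) = \log \sum_{i=1}^N \e^{Y_t(i)}$.

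The plan is then to apply the strong law of large numbers to $(\xi_t)$ and to transfer the resulting convergence $X^N_t(\eq)/t \to v_{N,\beta} = \E[\xi_1]$ to both $\max_j X^N_t(j)/t$ and $\min_j X^N_t(j)/t$. Integrability of $\xi_1$ is handled by sandwiching: the upper bound $\xi_1 \leq \log N + \Delta^\ast$, where $\Delta^\ast := \max_k(\Delta_k(1) - X^N_0(\eq))$ has Gumbel distribution, takes care of the positive part, while $\xi_1 \geq \max_i Y_1(i)$ together with a short tail estimate based on $\beta > 1$ handles the negative part. For the maximum, the deterministic sandwich
\[
\max_{j \leq N} X^N_t(j) \leq X^N_t(\eq) \leq \max_{j \leq N} X^N_t(j) + \log N,
\]
obtained from $\e^{\max} \leq \sum_j \e^{X^N_t(j)} \leq N \e^{\max}$, immediately transfers the a.s.\ limit.

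For the minimum, I would write $\min_j X^N_t(j) = X^N_{t-1}(\eq) + \min_i Y_t(i)$ with $(\min_i Y_t(i))_t$ i.i.d., and observe, via the bound $\sum_t \P(|W| \geq \epsilon t) \leq 1 + \epsilon^{-1}\E|W|$ and Borel--Cantelli, that the convergence $\min_i Y_t(i)/t \to 0$ a.s.\ reduces to $\E|\min_i Y_1(i)| < \infty$. The positive part of $\min_i Y_1(i)$ is again controlled by $\Delta^\ast$, so everything comes down to an exponential lower tail estimate. I would argue, conditionally on $\calP_1$, that the probability that one of the $N$ sampled particles lies below $-L$ is dominated by the ratio of the Boltzmann mass $\sum_{\Delta_k - X^N_0(\eq) < -L} \e^{\beta(\Delta_k-X^N_0(\eq))}$, whose expectation equals $\int_{-\infty}^{-L}\e^{-(\beta-1)x}\dx = \e^{-(\beta-1)L}/(\beta-1)$ and is finite precisely because $\beta > 1$, to the partition function $\sum_k \e^{\beta(\Delta_k-X^N_0(\eq))}$, bounded below by $\e^{\beta \Delta^\ast}$; a union bound over the $N$ draws yields $\P(\min_i Y_1(i) < -L) \leq C_N \e^{-(\beta-1)L}$ for large $L$, which is the desired exponential lower tail.

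The main technical obstacle is this lower tail estimate: controlling the Boltzmann ratio requires handling a potentially small partition function, and it is precisely the assumption $\beta > 1$ — the same condition that ensures the model is well-defined in Proposition~\ref{prop:def} — that makes the relevant integral converge at $-\infty$.
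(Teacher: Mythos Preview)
Your approach is correct and is essentially the one the paper takes: establish that $(X^N_t(\eq)-X^N_{t-1}(\eq))_t$ is i.i.d.\ with finite mean, apply the strong law of large numbers, and then transfer to $\max_j X^N_t(j)$ and $\min_j X^N_t(j)$ by noting that $\max_j X^N_t(j)-X^N_{t-1}(\eq)$ and $\min_j X^N_t(j)-X^N_{t-1}(\eq)$ are themselves i.i.d.\ with finite mean (the paper does this for the max as well, rather than use your deterministic sandwich, but that is a cosmetic difference). The paper simply asserts the finite-mean claims without proof, so your integrability arguments actually fill in detail the paper omits; note only the sign typo in your integrand, which should be $\e^{(\beta-1)x}$ rather than $\e^{-(\beta-1)x}$.
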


\begin{proof}
By Proposition~\ref{prop:def}, $\big(X^N_{t+1}(\eq)- X^N_{t}(\eq) :\: t \in \N \big)$ are i.i.d. random variables with finite mean, therefore 
\[
 \lim_{t \to +\infty} \frac{X^N_t(\eq)}{t} = v_{N,\beta} 
 \quad \text{a.s. by the law of large numbers.}
\]
Notice that both $\big(\max X^N_t(j)-X^N_{t-1}(\eq) \big)_t$ and $\big(\min X^N_t(j)-X^N_{t-1}(\eq)\big)_t$ are sequences of i.i.d. random variables with finite mean, which yields (\ref{equa:lem:speed}).
\end{proof}

In a similar way, we are able to obtain a simple structure for the genealogy of the process, and describe its law conditionally on the position of the particles.
\begin{lemma}
\label{lem:coal}
The sequence $(A^N_t)_{t \in \N}$ defined in \eqref{eqn:definitionAN} is i.i.d. Moreover, it remains independent conditionally on $\mathcal{H} = \sigma(X^N_t(j), j \leq N, t \geq 0)$, with the conditional probabilities
\begin{align}\label{equa:lem:coal_coeficient}
\P\big(A^N_{t+1}= \bar{k} \mid \mathcal{H} \big) = \theta^N_t(k_1) \! \ldots \! \theta^N_t(k_N), \quad & \text{where} \quad 
 \bar{k} = (k_1 , \ldots , k_N) \in \{1, \ldots ,N\}^{N}; \notag \\
 & \text{and} \quad
 \theta^N_t(k) := \frac{\e^{X^N_t(k)}}{\sum_{i=1}^N \e^{X^N_t(i)}} .
\end{align}
\end{lemma}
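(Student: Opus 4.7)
The plan is to exploit the marking (or colouring) theorem for Poisson point processes, which allows one to decouple the genealogical label of an offspring from its spatial position. By the superposition property of independent PPPs, the full cloud of offspring at generation $t+1$, namely $\bigcup_{j \leq N} (X^N_t(j) + \mathcal{P}_{t+1}(j))$, is a PPP with intensity $\e^{-(x - X^N_t(\eq))} \dx$. By the marking theorem, this PPP can equivalently be realised by drawing a single PPP $\mathcal{Q}_{t+1}$ with the above intensity and independently attaching to each of its atoms $p$ a ``colour'' $M_p \in \{1, \ldots, N\}$ with $\P(M_p = j \mid \mathcal{Q}_{t+1}) = \theta^N_t(j)$. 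Indeed the atoms of $\mathcal{Q}_{t+1}$ that carry colour $j$ then form a PPP with intensity $\theta^N_t(j)\e^{-(x - X^N_t(\eq))} \dx = \e^{-(x - X^N_t(j))} \dx$, which matches the offspring PPP of parent $j$, and the $N$ colour classes are mutually independent. This provides a coupling of the $(N,\beta)$-BRW with the coloured-PPP picture under which $A^N_{t+1}(i)$ is precisely the colour carried by the atom at position $X^N_{t+1}(i)$.

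The key observation is then that the selection procedure reads only the positions of the atoms (through the weights $\e^{\beta x}$) and never their colours. Consequently, by the independence of the colouring from $\mathcal{Q}_{t+1}$, conditionally on $\mathcal{Q}_{t+1}$ and on the random indices of the $N$ selected atoms, the colours of these $N$ atoms are i.i.d. with common law $(\theta^N_t(j))_{j \leq N}$. Because this conditional distribution is a deterministic function of $\theta^N_t$, and $\theta^N_t$ is $\mathcal{H}$-measurable, the tower property then yields formula~\eqref{equa:lem:coal_coeficient} for the conditional law of $A^N_{t+1}$ given $\mathcal{H}$.

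For the remaining claims, note that the $\mathcal{P}_t(j)$'s, hence the $\mathcal{Q}_t$'s, are i.i.d. across $t$ by Proposition~\ref{prop:def}, and the colourings used at distinct generations can be taken independent as well. This forces the $A^N_t$'s to be mutually independent, both conditionally on $\mathcal{H}$ and unconditionally, and the ``identically distributed'' part of the claim follows from the stationarity of the law of $\theta^N_t$ granted by Proposition~\ref{prop:def}. The main obstacle is setting up the coloured-PPP construction rigorously and verifying that it reproduces the joint law of offspring positions and parental labels used in the original definition; once this coupling is in place, the two displayed identities reduce to a routine conditioning argument.
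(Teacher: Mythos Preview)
Your proof is correct and follows essentially the same approach as the paper: both arguments rest on the marking/colouring theorem for Poisson point processes, observing that the superposition of the $N$ offspring PPPs is a single PPP with intensity $\e^{-(x-X^N_t(\eq))}\dx$ and that, conditionally on the atom positions, the parental label of each atom is independently distributed as $(\theta^N_t(j))_{j\leq N}$. Your write-up is more explicit than the paper's (in particular you spell out that the selection rule reads only positions and not colours, and you invoke Proposition~\ref{prop:def} for the identically-distributed part via the stationarity of $\theta^N_t$), but the underlying mechanism is identical.
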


\begin{proof}
To each point $x \in \sum \delta_{\Delta_{t+1}(k) - X^N_t(\eq)}$ we associate the mark $i$ if it is a point coming from $X^N_t(i) + \mathcal{P}_t(i)$. The invariance under superposition of independent PPP says that 
\[
\P\big(x \in X^N_t(i) + \mathcal{P}_t(i)\mid \mathcal{H} \big) = \frac{\e^{-(x- X^N_t(i))}}{ \sum_{j=1}^N \e^{-(x- X^N_t(j))}} = \frac{\e^{X^N_t(i)}}{ \sum_{j=1}^N \e^{X^N_t(j)}}.
\] 
By definition of $A_t^N(i)$, it is precisely the mark of $X^N_{t+1}(i)$, which yields (\ref{equa:lem:coal_coeficient}). The independence between the $A_t^N$ can be easily checked using Proposition~\ref{prop:def}. 
\end{proof}

\paragraph*{Organization of the paper.} In Section~\ref{sec:pddistribution}, we obtain some technical lemmas concerning the Poisson-Dirichlet distributions. We focus in Section~\ref{sec:cvexchangeable} on a class of coalescent processes generated by Poisson Dirichlet distributions and we prove a convergence criterion. Finally, in Section~\ref{sec:piy}, we provide an alternative construction of the $(N,\beta)$-BRW in terms of a Poisson-Dirichlet distribution, and we use the results obtained in the previous sections to prove Theorems~\ref{thm:main_vel} and~\ref{thm:main_coal}.
\section{Poisson-Dirichlet distribution}
\label{sec:pddistribution}
In this section, we focus on the two-parameter Poisson-Dirichlet distribution denoted as PD$(\alpha,\theta)$ distribution.

\begin{definition}[Definition~1 in \cite{PiY97}] \label{PD_def} For $\alpha \in (0,1)$ and $\theta > -\alpha$, let $(Y_j :\: j \in \N)$ be a family of independent r.v. such that $Y_j$ has Beta$(1-\alpha, \theta +j\alpha )$ distribution and write
\[
V_1 = Y_1, 
\quad \text{and} 
\quad V_j = \prod_{i=1}^{j-1} (1-Y_i) Y_j, \qquad \text{if } j \geq 2.
\]
Let $U_1 \geq U_2 \geq \cdots$ be the ranked values of $(V_n)$, we say that the sequence $(U_n)$ is the \emph{Poisson-Dirichlet distribution with parameters} $(\alpha,\theta)$. 
\end{definition}

Notice that for any $k \in \N$ and $n \in \N$, we have
\[
 \P\big( V_n = U_k \mid (U_j, j \in \N), V_1,\ldots, V_{n-1} \big) = \frac{U_k \ind{U_k \not \in \{ V_1,\ldots V_{n-1}\}}}{1 - V_1 -V_2 - \cdots - V_{n-1}},
\]
for this reason we say that $(V_n)$ follows the \emph{size-biased pick} from a PD$(\alpha,\theta)$. It is well known that there exists a strong connexion between PD distributions and PPP \cite{PiY97}, we recall some of these results in the proposition below.

\begin{proposition}[Proposition 10 in \cite{PiY97}]
\label{prop:PD-PPP}
Let $x_1 > x_2 > \ldots $ be the points of a $\mathrm{PPP}(\e^{-x} \mathrm{d}x)$ and write 
$L= \sum_{j=1}^{+\infty} \e^{\beta x_j}$ and $U_j = \e^{\beta x_j} / L$. Then $(U_j, j \geq 1)$ has $\mathrm{PD}(\beta^{-1},0)$ distribution and
\[
 \lim_{n \to +\infty} n^\beta U_n = 1/L \quad \text{a.s.}
\]
\end{proposition}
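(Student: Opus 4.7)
My plan is to split the statement into two independent claims: the distributional identification $(U_j) \sim \mathrm{PD}(\beta^{-1},0)$ and the almost-sure asymptotic $n^\beta U_n \to 1/L$. Both become transparent after pushing the PPP through a simple change of variable.

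First I would apply the mapping theorem of Poisson point processes to the transformation $x \mapsto y = \e^{\beta x}$. The image of $\mathrm{PPP}(\e^{-x}\dx)$ on $\R$ is a PPP on $(0,\infty)$ with intensity
\[
\e^{-\beta^{-1}\log y}\,\beta^{-1} y^{-1}\,\dy \;=\; \beta^{-1} y^{-1-1/\beta}\,\dy.
\]
Thus the sequence $y_j := \e^{\beta x_j}$ is the ranked enumeration of such a PPP, and $L = \sum_j y_j$ is a.s.\ finite (the integral $\int_0^1 y \cdot y^{-1-1/\beta}\dy$ converges because $1-1/\beta > 0$ for $\beta > 1$, while $\int_1^\infty y^{-1-1/\beta}\dy < \infty$).

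For the distributional part, I would identify this PPP with the jumps (up to a trivial time change) of an $\alpha$-stable subordinator with $\alpha = 1/\beta \in (0,1)$, whose Lévy measure is a constant multiple of $y^{-1-\alpha}\dy$. Multiplying the intensity by a constant rescales $L$ by that constant but leaves the normalized ratios $U_j = y_j / L$ invariant, so it suffices to invoke the classical Perman--Pitman--Yor identification: for an $\alpha$-stable subordinator, the decreasing rearrangement of its jumps on $[0,1]$ normalized by their total mass follows $\mathrm{PD}(\alpha,0)$. Alternatively, to stay self-contained one can verify Definition~\ref{PD_def} directly via the size-biased pick $V_1$ from $(U_j)$: a Palm computation on the PPP gives $V_1 \sim \mathrm{Beta}(1-\alpha,\alpha)$, and the scaling self-similarity of $y^{-1-\alpha}\dy$ together with Campbell's formula shows that the residual point process, rescaled by $1-V_1$, is again a PPP of the same type, independent of $V_1$. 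Iterating yields precisely the stick-breaking construction of $\mathrm{PD}(\alpha,0)$.

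For the almost-sure statement, let $N(y) := \#\{ j : y_j > y \}$, which is Poisson with mean $\Lambda(y) = \int_y^\infty \beta^{-1} t^{-1-1/\beta}\dt = y^{-1/\beta}$. Since $N(y) \to \infty$ a.s.\ as $y \to 0$, the strong law for the inverse of a Poisson process gives $\Lambda(y_n)/n \to 1$ a.s.; equivalently $y_n^{-1/\beta}/n \to 1$, hence $n^\beta y_n \to 1$ a.s. Dividing by $L$ (which is a.s.\ finite and independent of large-$n$ behavior of $y_n$) yields $n^\beta U_n \to 1/L$ a.s.

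The main obstacle is the distributional identification, i.e.\ justifying $\mathrm{PD}(\beta^{-1},0)$ from the PPP: if one cites Pitman--Yor this is immediate by intensity matching, but a self-contained derivation requires the Palm/size-biased argument sketched above. The asymptotic claim, by contrast, is a routine consequence of the Poisson concentration of the counting function.
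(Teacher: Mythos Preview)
Your argument is correct. The paper does not prove this proposition at all: it is stated as a direct citation of Proposition~10 in Pitman--Yor~\cite{PiY97}, with no accompanying proof. Your proposal therefore supplies what the paper omits.

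A couple of minor points of phrasing. When you write that ``multiplying the intensity by a constant rescales $L$ by that constant'', the scaling factor on the points (and hence on $L$) is actually $c^{1/\alpha}$ rather than $c$; what matters, and what you correctly use, is only that the ratios $U_j$ are unaffected, which follows from the self-similarity of the power-law intensity $y^{-1-\alpha}\,\dy$. Likewise, the remark that $L$ is ``independent of large-$n$ behavior of $y_n$'' is not quite right as stated, since $L$ depends on all the $y_j$; the correct observation is simply that $L$ is an a.s.\ finite positive random variable, so the a.s.\ limit $n^\beta y_n \to 1$ immediately yields $n^\beta U_n = n^\beta y_n / L \to 1/L$ a.s. Neither of these affects the validity of the argument.
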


Notice from Propositions~\ref{prop:def} and~\ref{prop:PD-PPP} that $(\e^{\beta X_n^N(i)}/\sum_{j=1}^{+\infty} \e^{\beta \Delta_n(j)})$ has the distribution of $(V_i)$ the size-biased pick from $\mathrm{PD}(\beta^{-1},0)$, which makes the model solvable. 

\begin{remark}[Change of parameter] \label{rem:changeofparameter} If $V_1, V_2, \ldots$ is a \emph{size-biased pick from} a PD$(\alpha,\theta)$, then 
\[
\frac{V_2}{1-V_1}, \frac{V_3}{1-V_1}, \ldots, 
\quad
\text{ has the distribution of a size-biased pick from a PD$(\alpha,\alpha+\theta)$,}
\] 
moreover, it is independent of $V_1$. That is, the sequence obtained from $V_1, V_2, \ldots$ after discarding the first sampled element $V_1$ and re-normalizing is a \emph{size-biased pick from} a PD$(\alpha, \alpha+ \theta)$. Therefore, ordering this sequence one obtains a PD$(\alpha, \alpha+ \theta)$ sequence. 
\end{remark}

In what follows, we fix $\alpha \in (0,1)$ and $\theta > -\alpha$, and let $c$ and $C$ be positive constants, that may change from line to line and implicitly depend on $\alpha$ and $\theta$. We will focus attention on the convergence and the concentration properties of 
\begin{equation}
 \label{eqn:defSigmaN}
 \Sigma_n := \sum_{j=1}^n V_j^{\alpha} = \sum_{j=1}^n Y_j^{\alpha} \prod_{i=1}^{j-1}(1-Y_i)^{\alpha}; \qquad n \in \N.
\end{equation}

\begin{lemma}
\label{lem:martingale}
Set $M_n := \prod_{i=1}^{n}(1-Y_i)$, then, there exists a positive r.v. $M_\infty$ such that
\begin{equation}
 \label{eqn:pconvergence}
 \lim_{n \to +\infty} \left(n^{\frac{1-\alpha}{\alpha}} M_n\right)^\gamma = M^\gamma_\infty \quad \text{a.s. and in } \mathbb{L}^1 \text{ for all } \gamma > -(\theta+\alpha),
\end{equation}
with $\gamma$-moment verifying $\E(M_\infty^\gamma) = \Phi_{\theta,\alpha}(\gamma):= \alpha^\gamma\frac{\Gamma(\theta+1)\Gamma\big(\frac{\theta+\gamma}{\alpha} +1 \big)}{\Gamma(\theta + \gamma+1) \Gamma\big( \frac{\theta}{\alpha}+1 \big)}$.
Moreover, if $0 < \gamma < \theta+\alpha$, then there exists $C_\gamma > 0$ such that
\begin{equation}
 \label{eqn:pupperbound}
 \P\left( \inf_{n \geq 0} n^{\frac{1-\alpha}{\alpha}}M_n \leq y \right)\leq C_\gamma y^\gamma, \qquad \text{for all $n \geq 1$ and $y \geq 0$}.
\end{equation}
\end{lemma}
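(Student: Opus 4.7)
The plan is to package \eqref{eqn:pconvergence} as an $L^1$-convergence statement for a one-parameter family of positive martingales indexed by the exponent, and then to read off \eqref{eqn:pupperbound} from Doob's inequality applied to one of them.

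First I would set up the martingale. For $\gamma > -(\theta+\alpha)$ the condition $\theta + i\alpha + \gamma > 0$ holds for every $i \geq 1$, so $m_i(\gamma) := \E[(1-Y_i)^\gamma] = \frac{\Gamma(\theta+i\alpha+\gamma)\,\Gamma(\theta+(i-1)\alpha+1)}{\Gamma(\theta+i\alpha)\,\Gamma(\theta+(i-1)\alpha+1+\gamma)}$ is finite, and $a_n^{(\gamma)} := \E[M_n^\gamma] = \prod_{i=1}^n m_i(\gamma)$. By independence of $(Y_i)$, $W_n^{(\gamma)} := M_n^\gamma / a_n^{(\gamma)}$ is a positive $\mathcal{F}_n$-martingale with $\mathcal{F}_n = \sigma(Y_1, \ldots, Y_n)$.

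Next I would evaluate $a_n^{(\gamma)}$ in closed form. Rewriting $\theta + i\alpha + 1 - \alpha = \theta + (i-1)\alpha + 1$ and using $\Gamma(x+1) = x\Gamma(x)$ together with $\prod_{j=0}^{n-1}(\theta + j\alpha + c) = \alpha^n \Gamma((\theta+c)/\alpha + n)/\Gamma((\theta+c)/\alpha)$ for $c \in \{0, \gamma\}$, the four factors of the product telescope into a ratio of four Gamma functions in $n$ alone; Stirling then yields $a_n^{(\gamma)} \sim \Phi_{\theta,\alpha}(\gamma)\, n^{-\gamma(1-\alpha)/\alpha}$ as $n \to \infty$, with $a_n^{(\gamma)}\, n^{\gamma(1-\alpha)/\alpha}$ sandwiched between two positive constants for all $n \geq 1$. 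Choosing $p > 1$ so small that $p\gamma > -(\theta+\alpha)$ still holds (possible because the hypothesis on $\gamma$ is strict), we obtain $\E[(W_n^{(\gamma)})^p] = a_n^{(p\gamma)}/(a_n^{(\gamma)})^p$ bounded in $n$, so $W_n^{(\gamma)}$ is $L^p$-bounded, hence uniformly integrable, and converges a.s.\ and in $L^1$ to some $W_\infty^{(\gamma)}$ with $\E[W_\infty^{(\gamma)}] = 1$. Specialising to $\gamma = 1$ defines $M_\infty := \lim_n n^{(1-\alpha)/\alpha}M_n = \Phi_{\theta,\alpha}(1)\,W_\infty^{(1)} \geq 0$ a.s.; for general $\gamma$, continuity gives $(n^{(1-\alpha)/\alpha}M_n)^\gamma \to M_\infty^\gamma$ a.s., while the $L^1$ convergence of $W_n^{(\gamma)}$ together with $a_n^{(\gamma)}\, n^{\gamma(1-\alpha)/\alpha} \to \Phi_{\theta,\alpha}(\gamma)$ upgrades this convergence to $L^1$ and identifies $\E[M_\infty^\gamma] = \Phi_{\theta,\alpha}(\gamma)$.

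For \eqref{eqn:pupperbound}, I would apply the martingale construction with exponent $-\gamma \in (-(\theta+\alpha), 0)$ and Doob's $L^1$-maximal inequality: $\P(\sup_{n\geq 1} W_n^{(-\gamma)} \geq \mu) \leq 1/\mu$ for every $\mu > 0$. The uniform lower bound $a_n^{(-\gamma)} \geq c\, n^{\gamma(1-\alpha)/\alpha}$ from the previous step gives $(n^{(1-\alpha)/\alpha}M_n)^{-\gamma} \leq C\, W_n^{(-\gamma)}$ uniformly in $n$, hence $\P(\sup_n (n^{(1-\alpha)/\alpha}M_n)^{-\gamma} \geq \lambda) \leq C/\lambda$, which becomes \eqref{eqn:pupperbound} after the substitution $\lambda = y^{-\gamma}$; as a by-product this shows $\P(M_\infty = 0) = 0$, validating the negative-$\gamma$ case of \eqref{eqn:pconvergence}. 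The only step with genuine content is the telescoping computation that produces the explicit constant $\Phi_{\theta,\alpha}(\gamma)$; everything else is standard positive-martingale machinery, and that is the step I expect to require the most bookkeeping.
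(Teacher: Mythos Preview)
Your proof is correct and follows essentially the same route as the paper: normalize $M_n^\gamma$ by its mean to get a positive martingale, compute that mean explicitly via Gamma functions and Stirling to extract both the constant $\Phi_{\theta,\alpha}(\gamma)$ and the power $n^{-\gamma(1-\alpha)/\alpha}$, prove $L^1$ convergence, and then read off the tail bound from Doob's maximal inequality applied with exponent $-\gamma$. The only cosmetic difference is that the paper obtains uniform integrability from Kakutani's theorem (checking $\E(M_n^{\gamma/2})\E(M_n^\gamma)^{-1/2}$ stays bounded away from zero) rather than from your $L^p$-boundedness argument with $p>1$ close to $1$; both rest on the same asymptotic $a_n^{(\gamma)}\sim \Phi_{\theta,\alpha}(\gamma)\,n^{-\gamma(1-\alpha)/\alpha}$.
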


Notice that if $\gamma > -\theta$, then $\Phi_{\theta,\alpha}(\gamma)= \alpha^\gamma\frac{\Gamma(\theta)\Gamma\big(\frac{\theta+\gamma}{\alpha} \big)}{\Gamma(\theta + \gamma) \Gamma\big( \frac{\theta}{\alpha} \big)}$.

\begin{proof}
Fix $\gamma > -(\theta + \alpha)$, then $\big(M_n^\gamma /\E(M_n^\gamma)\big)$ is a non-negative martingale with respect to its natural filtration and 
\begin{align*}
 \E\left( M_n^\gamma \right) &= 
 \frac{\Gamma(\theta + \gamma + n\alpha )}{\Gamma(\theta + n\alpha)}
 \frac{\Gamma\big(n+ \frac{\theta}{\alpha}\big)}{\Gamma \big(n + \frac{\theta+\gamma}{\alpha} \big)} 
 \frac{\Gamma(\theta+1) \Gamma\big( \frac{\theta + \gamma}{\alpha} +1\big)}{\Gamma(\theta + \gamma+1)\Gamma\big(\frac{\theta}{\alpha} +1 \big)} \\
 &\sim \Phi_{\theta,\alpha}(\gamma) n^{-\gamma \frac{1-\alpha}{\alpha}}, \quad \text{as } n \to +\infty.
\end{align*}
Since $\lim_{n \to +\infty} \E(M_n^{\gamma/2})\E(M_n^\gamma)^{-1/2} > 0$, Kakutani's theorem says that $M_n^\gamma/\E(M_n^\gamma)$ converges a.s. and in $\mathbb{L}^1$ as $n \to +\infty$, implying
\eqref{eqn:pconvergence} with $M_\infty = \lim_{n \to +\infty} M_n n^{\frac{1-\alpha}{\alpha}}$. 

In particular, if $0 <\gamma < \theta + \alpha$ we obtain from Doob's martingale inequality that 
\[
 \P\left( \inf_{n \geq 0} n^{\frac{1-\alpha}{\alpha}}M_n \leq y \right) = \P\left( \sup_{n \geq 0} M_n^{-\gamma}n^{-\gamma\frac{1-\alpha}{\alpha}} \geq y^{-\gamma} \right) \leq \P\left( \sup_{n \geq 0} \tfrac{M_n^{-\gamma}}{\E(M_n^{-\gamma})} \geq y^{-\gamma}/C_\gamma \right),
\]
with $C_\gamma := \sup_{n\in \N} \E[M_n^{-\gamma}]/ n^{\gamma\frac{1-\alpha}{\alpha}} < \infty $, proving \eqref{eqn:pupperbound}.
\end{proof}

We now focus on the convergence of the series 
$\sum Y_j^{\alpha} j^{\alpha-1}$.

\begin{lemma}
\label{lem:sumindep}
Let $S_n := \sum_{j=1}^n Y_j^{\alpha} j^{\alpha-1}$ and $\Psi_\alpha := \alpha^{-\alpha} \Gamma(1-\alpha)^{-1}$, then, there exists a random variable $S_\infty$ such that 
\[
 \lim_{n \to +\infty} S_n - \Psi_\alpha \log n = S_\infty \quad \text{a.s.}
\]
Moreover, there exists $C>0$ such that for all $n \in \N$ and $y \geq 0$,
\[
 \P\left( |S_n-\E(S_n)| \geq y \right) \leq C \e^{-y^{2-\alpha}}.
\]
\end{lemma}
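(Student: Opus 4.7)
The plan is to treat the two claims independently, both relying on moment estimates for $Y_j^\alpha$ obtained from the Beta-function identity $\E(Y_j^p) = B(1-\alpha+p, \theta+j\alpha)/B(1-\alpha, \theta+j\alpha)$ together with Stirling's asymptotics for ratios of Gamma functions. Throughout, set $b_j := \theta + j\alpha$ and $X_j := (Y_j^\alpha - \E(Y_j^\alpha)) j^{\alpha-1}$, so that $S_n - \E(S_n) = \sum_{j=1}^n X_j$ is a sum of independent centred variables.

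For the almost sure convergence I would first compute $\E(Y_j^\alpha) = \Gamma(b_j + 1-\alpha)/[\Gamma(1-\alpha)\Gamma(1+b_j)]$; Stirling yields $\E(Y_j^\alpha) \sim \Psi_\alpha j^{-\alpha}$, hence $\E(Y_j^\alpha)\, j^{\alpha-1} = \Psi_\alpha/j + O(j^{-2})$ and therefore $\E(S_n) = \Psi_\alpha \log n + c_\alpha + o(1)$ for some constant $c_\alpha$. A similar computation gives $\E(Y_j^{2\alpha}) \sim [\Gamma(1+\alpha)/\Gamma(1-\alpha)]\,(j\alpha)^{-2\alpha}$, so $\Var(Y_j^\alpha) = O(j^{-2\alpha})$ and $\Var(X_j) = O(j^{-2})$, which is summable. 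Kolmogorov's convergence criterion for independent centred variables with summable variance then yields the almost sure convergence of $\sum_{j} X_j$; combining with the expansion of $\E(S_n)$ establishes the existence of $S_\infty := \lim_{n \to \infty}(S_n - \Psi_\alpha \log n)$.

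For the concentration bound I would extend the moment computation to all integers $k \geq 2$: in the regime $k \leq j/\alpha$ the same Beta/Stirling manipulation gives $\E(Y_j^{k\alpha}) \leq C^k (k/j)^{k\alpha}$, hence $\E(|X_j|^k) \leq (2C)^k k^{k\alpha} j^{-k}$, while for larger $k$ the trivial bound $|X_j| \leq j^{\alpha-1} \leq 1$ suffices. Plugging this into the series expansion of $\E(\e^{\lambda X_j})$ and using $k! \geq (k/e)^k$, the $k$-th term is at most $(Ce\lambda/(jk^{1-\alpha}))^k$, whose maximum over $k$ is of order $\exp(c_\alpha (\lambda/j)^{1/(1-\alpha)})$; altogether $\log \E(\e^{\lambda X_j}) \leq C((\lambda/j)^2 + (\lambda/j)^{1/(1-\alpha)})$. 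Summing in $j$ and using that $\sum_j j^{-1/(1-\alpha)} < \infty$ (valid since $1/(1-\alpha) > 1$) yields $\log \E(\e^{\lambda (S_n - \E S_n)}) \leq C_\alpha(\lambda^2 + \lambda^{1/(1-\alpha)})$ uniformly in $n$. A Chernoff optimization in $\lambda$ then produces $\P(|S_n - \E(S_n)| \geq y) \leq C\e^{-cy^{1/\alpha}}$ for large $y$, and the remaining bounded range is absorbed into a larger constant. Since the identity $(1-\alpha)^2 > 0$ rearranges to $1/\alpha > 2 - \alpha$ for every $\alpha \in (0,1)$, this is at least as strong as the claimed bound.

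The main obstacle is securing the moment estimate $\E(Y_j^{k\alpha}) \leq C^k (k/j)^{k\alpha}$ with constants uniform in both $k$ and $j$. The Gamma-ratio $\Gamma(b_j + 1-\alpha)/\Gamma(b_j + 1-\alpha + k\alpha)$ admits clean Stirling asymptotics in the two regimes $k\alpha \ll b_j$ and $k\alpha \gg b_j$, but the transitional regime $k\alpha \asymp b_j$ requires a slightly more delicate analysis (one can alternatively sidestep this difficulty by handling large $k$ via the trivial bound $|X_j| \leq 1$ and controlling only $k$ up to a multiple of $j$). Once this uniform moment estimate is in place, the remainder is a routine Chernoff computation and summation of elementary series.
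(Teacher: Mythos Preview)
Your argument for the almost sure convergence is essentially the paper's: both compute $\E(Y_j^\alpha)\sim\Psi_\alpha j^{-\alpha}$ and $\Var(Y_j^\alpha)=O(j^{-2\alpha})$ via the Beta moment formula and Stirling, then invoke Kolmogorov's criterion.

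For the concentration bound, both you and the paper run a Chernoff argument, but the way the individual moment generating functions $\E\e^{\lambda X_j}$ are controlled differs. The paper uses only two ingredients, boundedness $|Y_j^\alpha-\E Y_j^\alpha|\le 1$ and $\Var(Y_j^\alpha)=O(j^{-2\alpha})$, and splits on whether $\lambda j^{\alpha-1}\le 1$: for large $j$ it uses $\e^x\le 1+x+cx^2$, for small $j$ the crude bound $\E\e^{\lambda X_j}\le \e^{\lambda j^{\alpha-1}}$, and then sums to obtain $\log\E\e^{\lambda(S_n-\E S_n)}\le \lambda^{(2-\alpha)/(1-\alpha)}+c\lambda^2$, which after optimisation gives the stated exponent $2-\alpha$. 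Your route instead bounds all moments $\E|X_j|^k$ uniformly and sums the full Taylor series, arriving at $\log\E\e^{\lambda X_j}\le C((\lambda/j)^2+(\lambda/j)^{1/(1-\alpha)})$; summing over $j$ gives $C(\lambda^2+\lambda^{1/(1-\alpha)})$. This buys you a genuinely sharper tail, at the cost of the uniform moment estimate you correctly identify as the one non-routine step (it does go through: bounding the Beta density by $y^{-\alpha}\e^{-(b_j-1)y}/B(1-\alpha,b_j)$ and integrating gives $\E(Y_j^{k\alpha})\le C\Gamma(1+(k-1)\alpha)\,b_j^{-k\alpha}$, which after Stirling is the desired $C^k(k/j)^{k\alpha}$ for all $k$). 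The paper's approach is more elementary and avoids this entirely.

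One small slip: your Chernoff optimisation yields $\e^{-cy^{1/\alpha}}$ only when $1/(1-\alpha)\ge 2$, i.e.\ $\alpha\ge 1/2$; for $\alpha<1/2$ the $\lambda^2$ term dominates and you get $\e^{-cy^{2}}$ instead. Since $\min(2,1/\alpha)\ge 2-\alpha$ in either case, the lemma's bound follows regardless, so this does not affect the conclusion.
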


\begin{proof}
Since $Y_j$ has Beta$(1-\alpha, \theta+j\alpha)$ distribution, we have
\[
\E((jY_j)^\alpha) = \frac{1}{\alpha^\alpha \Gamma(1-\alpha)} + \mathcal{O}(1/j) 
\quad \text{and} \quad 
\mathbf{V}\mathrm{ar}((jY_j)^\alpha) = \frac{\Gamma(1+\alpha)\Gamma(1-\alpha) - 1 }{\alpha^{2\alpha} \Gamma(1-\alpha)^2} + \mathcal{O}(1/j), 
\]
which implies that 
$\sum \Var(Y_j^{\alpha-1}) < +\infty$ 
and that $\E(S_n) =\Psi_\alpha \log n + C_S + o(1)$ 
with $C_S \in \R$. Thanks to 
$Y_j^\alpha - \E(Y_j^\alpha) \in (-1,1)$ a.s.
we deduce from Kolmogorov's three-series theorem that $S_n-\E(S_n)$ and hence that $S_n - \Psi_\alpha \log n$ converge a.s. 
To bound $\P(S_n - \E(S_n) \geq y)$, notice that 
\[
 \P(S_n - \E(S_n) \geq y) \leq \e^{-\lambda y} \E\left[ \e^{\lambda(S_n - \E(S_n))} \right] \leq \e^{-\lambda y} \prod_{j=1}^n \E\left( \e^{\lambda j^{\alpha - 1} (Y_j^\alpha - \E(Y_j^\alpha))} \right),
\]
for all $y \geq 0$ and $\lambda > 0$.
Taking $c>0$ such that $\e^x \leq 1 + x + c x^2$ for $x \in (-1,1)$, we obtain
\[
 \E\left[ \e^{\lambda j^{\alpha - 1} (Y_j^\alpha - \E(Y_j^\alpha))} \right]
 \leq 
 \begin{cases}
  \e^{\lambda j^{\alpha - 1}} & \mathrm{if} \quad \lambda j^{\alpha-1} > 1; \\
  1 + c \lambda^2 j^{2(\alpha - 1)} \mathbf{V}\mathrm{ar}(Y_j^\alpha) & \mathrm{if} \quad \lambda j^{\alpha-1} \leq 1.
 \end{cases}
\]
Since $\sum_{j^{1-\alpha} \leq \lambda} j^{\alpha - 1} < \lambda^{\frac{1}{1-\alpha}}$ for all $\alpha \in (0,1)$ and $\theta \geq 0$, there exists $c= c(\alpha, \theta)$ such that 
\begin{align*}
 \P\left( S_n - \E(S_n) \geq y \right) &\leq \e^{-\lambda y} \prod_{j^{1-\alpha} \leq \lambda} \e^{\lambda j^{\alpha - 1}} 
 \times \prod_{j^{1-\alpha} > \lambda} \left( 1 + c \frac{\lambda^2}{j^{2}} \right)\\
 &\leq \exp \left( -\lambda y +\lambda^{\frac{2-\alpha}{1-\alpha}} +c \lambda^2 \right),
 \quad \text{ for all $n \in \N$ and $y \geq 0$}.
\end{align*}
Let $\varrho := (2-\alpha)/(1-\alpha)> 2$, then there exists $C = C(\alpha,\theta) >0$ such that 
\[
 \P\left( S_n - \E(S_n) \geq y \right) \leq C\exp \left( -\lambda y + C \lambda^{\varrho }\right),
 \quad \text{for all $n \in \N$ and $y \geq 0$}.
\]
Optimizing in $\lambda > 0$ we obtain
\[
 \P\left( S_n - \E(S_n) \geq y \right) \leq C \exp\left(-y^{\varrho/(\varrho-1)}C^{1/(1-\varrho)} \left(\varrho^{1/(1-\varrho)} - \varrho^{\varrho/(1-\varrho)} \right) \right),
\]
with 
$C^{1/(1-\varrho)} \left[\varrho^{1/(1-\varrho)} - \varrho^{\varrho/(1-\varrho)} \right]> 0$, as $\varrho > 1$.
The same argument, with the obvious changes, holds for $\P(S_n - \E(S_n) \leq -y)$, therefore, there exists $C>0$ such that
$
\P\left( \left| S_n - \E(S_n) \right| \geq y \right) \leq C \exp\left(-y^{\varrho/(\varrho-1)}/C\right),
$
proving the second statement.
\end{proof}

With the above results, we obtain the convergence of $\Sigma_n= \sum V_j^\alpha$ as well as its tail probabilities.

\begin{lemma}
\label{lem:mainbege}
With the notation of Lemmas \ref{lem:martingale} and \ref{lem:sumindep}, we have
\[
 \lim_{n \to +\infty} \frac{\Sigma_n}{\log n} = \Psi_\alpha M_\infty^\alpha \quad \text{a.s. and in } \mathbb{L}^1.
\]
Moreover, for any $0 < \gamma < \alpha + \theta$ there exists $D_{\gamma}$ such that for all $n \geq 1$ large enough and $u > 0$
\[
 \P\left( \Sigma_n \leq u \log n \right) \leq D_{\gamma} u^{\frac{\gamma}{\alpha}}.
\]
\end{lemma}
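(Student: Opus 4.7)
The proof leverages the factorisation $V_j^\alpha = Y_j^\alpha M_{j-1}^\alpha$ (with the convention $M_0 := 1$) that reduces $\Sigma_n$ to a product of two asymptotic objects already controlled by the previous lemmas: Lemma~\ref{lem:martingale} applied with $\gamma = \alpha$ gives $j^{1-\alpha} M_{j-1}^\alpha \to M_\infty^\alpha$ almost surely, while Lemma~\ref{lem:sumindep} says that $S_n = \sum_{j \leq n} Y_j^\alpha j^{\alpha-1}$ satisfies $S_n/\log n \to \Psi_\alpha$ almost surely. Multiplying these two heuristics yields the expected limit $\Psi_\alpha M_\infty^\alpha \log n$.

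\textbf{Almost sure convergence.} I will write $V_j^\alpha = (Y_j^\alpha j^{\alpha-1}) \cdot a_j$ with $a_j := j^{1-\alpha} M_{j-1}^\alpha$, which converges a.s.\ to $M_\infty^\alpha$, and then run a Toeplitz-type averaging argument along the random weights $w_j := Y_j^\alpha j^{\alpha-1}$: for any $\epsilon > 0$ one picks $J = J(\omega)$ so that $|a_j - M_\infty^\alpha| < \epsilon$ for $j > J$, bounds the head $\sum_{j \leq J} V_j^\alpha$ by an a.s.\ finite $\mathcal{O}(1)$ quantity, sandwiches the tail between $(M_\infty^\alpha \pm \epsilon)(S_n - S_J)$, divides by $\log n$ and invokes $S_n/\log n \to \Psi_\alpha$ to conclude.

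\textbf{Convergence in $\mathbb{L}^1$.} Using the independence of $Y_j$ from $M_{j-1}$,
\[
\E(\Sigma_n) = \sum_{j=1}^n \E(Y_j^\alpha)\, \E(M_{j-1}^\alpha).
\]
A direct Beta-moment computation gives $\E(Y_j^\alpha) = \Gamma(\theta + j\alpha + 1 - \alpha)/[\Gamma(1-\alpha)\Gamma(\theta + j\alpha + 1)] \sim \Psi_\alpha j^{-\alpha}$, and Lemma~\ref{lem:martingale} yields $\E(M_{j-1}^\alpha) \sim \Phi_{\theta,\alpha}(\alpha)\, j^{\alpha-1}$, so $\E(\Sigma_n)/\log n \to \Psi_\alpha \Phi_{\theta,\alpha}(\alpha) = \E(\Psi_\alpha M_\infty^\alpha)$. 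Combined with the a.s.\ convergence and the non-negativity of $\Sigma_n / \log n$, Scheff\'e's lemma (matching mean limit) closes the $\mathbb{L}^1$ step.

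\textbf{Tail bound and main obstacle.} Fix $\gamma \in (0, \alpha + \theta)$ and set $\xi := \inf_{k \geq 1} k^{(1-\alpha)/\alpha} M_k$. On the event $\{\xi > y\}$ one has $M_{j-1}^\alpha \geq y^\alpha (j-1)^{\alpha-1}$ for all $j \geq 2$, hence $\Sigma_n \geq y^\alpha \tilde S_n$ where $\tilde S_n := \sum_{j=2}^n Y_j^\alpha (j-1)^{\alpha-1}$. The asymptotic $\E(\tilde S_n) = \Psi_\alpha \log n + \mathcal{O}(1)$ and the concentration $\P(|\tilde S_n - \E(\tilde S_n)| \geq z) \leq C \e^{-z^{2-\alpha}}$ transfer verbatim from Lemma~\ref{lem:sumindep} via a harmless index shift. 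Choosing $y := (2u/\Psi_\alpha)^{1/\alpha}$ gives the inclusion $\{\xi > y\} \cap \{\tilde S_n \geq \Psi_\alpha \log n/2\} \subseteq \{\Sigma_n > u \log n\}$, and a union bound combined with Lemmas~\ref{lem:martingale} and~\ref{lem:sumindep} yields
\[
\P(\Sigma_n \leq u \log n) \leq C_\gamma' u^{\gamma/\alpha} + C \e^{-c (\log n)^{2-\alpha}}.
\]
The delicate point — and the main obstacle to stating the bound cleanly as $D_\gamma u^{\gamma/\alpha}$ uniformly in $u > 0$ — is absorbing the exponential correction in $n$. I expect this to require a case split: for $u$ bounded below by a sufficiently small inverse power of $n$, the polynomial term dominates; for extremely small $u$, the bound is rescued either by picking a slightly larger exponent $\gamma' \in (\gamma, \alpha + \theta)$ in Lemma~\ref{lem:martingale} and rebalancing $y$ between the two contributions, or by falling back on the trivial lower bound $\Sigma_n \geq V_1^\alpha = Y_1^\alpha$ and the near-zero behaviour of a Beta density.
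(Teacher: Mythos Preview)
Your almost-sure argument (Toeplitz averaging) and the tail decomposition $\Sigma_n \geq \xi^\alpha \tilde S_n$ are the same as the paper's, which phrases the former via Stolz--Ces\`aro and also splits the tail according to whether $u$ is above or below a power of~$1/n$. Your $\mathbb{L}^1$ step via Scheff\'e is correct and cleaner than the paper's route, which instead establishes $\sup_n \E\big[(\Sigma_n/\log n)^2\big]<\infty$ by a direct second-moment expansion.

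There is, however, a real gap in your handling of the regime of extremely small $u$. The fallback $\Sigma_n \geq Y_1^\alpha$ yields only
\[
\P(\Sigma_n \leq u\log n)\;\leq\;\P\big(Y_1^\alpha \leq u\log n\big)\;\leq\;C\,(u\log n)^{(1-\alpha)/\alpha},
\]
since the Beta$(1-\alpha,\theta+\alpha)$ density behaves like $cy^{-\alpha}$ near zero. The exponent $(1-\alpha)/\alpha$ in $u$ is too small whenever $\gamma>1-\alpha$, and such $\gamma$ are both allowed by the statement and actually needed later (the application in Lemma~\ref{lem:thetaestimate} uses exponents $\gamma/\alpha$ close to $2+\theta/\alpha$). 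Your alternative suggestion --- passing to a larger $\gamma'\in(\gamma,\alpha+\theta)$ and rebalancing $y$ --- does not close the gap either: the concentration contribution $C\e^{-c(\log n)^{2-\alpha}}$ coming from Lemma~\ref{lem:sumindep} depends only on $n$, and no choice of the threshold $y$ can make it smaller than $u^{\gamma/\alpha}$ once $u\ll \exp(-c(\log n)^{2-\alpha})$. The paper's fix is to iterate your one-variable fallback: on $\{\Sigma_n\leq u\log n\}$ with $u\leq 1/n$ one shows by a short induction that $Y_j^\alpha\leq 2u\log n$ for every $j\leq j^*$, and independence of the $Y_j$ then gives $\P(\Sigma_n\leq u\log n)\leq C(u\log n)^{j^*(1-\alpha)/\alpha}$, which suffices as soon as $(1-\alpha)j^*>\gamma$.
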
 

\begin{proof}
Notice that $\Sigma_n = \sum_{j=1}^n (S_j - S_{j-1}) j^{1-\alpha}M^\alpha_{j-1}$ (since $S_0 :=0$) and that 
\[
 \lim_{n \to +\infty} \big( M_n (n+1)^{\frac{1-\alpha}{\alpha}} \big)^\alpha = M_\infty^\alpha \quad \text{and} \quad \lim_{n \to +\infty} \frac{S_n}{\log n} = \Psi_\alpha \quad \text{a.s.}
\]
by Lemmas~\ref{lem:martingale} and \ref{lem:sumindep} respectively. 
Then, Stolz-Cesàro theorem yields $ \Sigma_n/\log n \to \Psi_\alpha M_\infty^\alpha$ a.s. as $n\to \infty$.
Expanding $\big(\Sigma_n\big)^2$ we obtain
\begin{align*}
 \E\left( \Sigma_n^2 \right)
 = & \sum_{j=1}^n \E\left(V_j^{2\alpha}\right) + 2 \sum_{i=1}^{n-1} \sum_{j=i+1}^n \E\left( (V_i V_j)^\alpha \right) \\
 = & \sum_{j=1}^n \E\left( M_{j-1}^{2\alpha} \right) \E(Y_j^{2\alpha}) + 2 \sum_{i=1}^{n-1} \E\left( M_{i-1}^{2\alpha} \right) \E\left( (Y_i(1-Y_i))^\alpha \right) \sum_{j=i+1}^n \E\left( \tfrac{M_{j-1}^\alpha}{M_i^\alpha} \right) \E(Y_j^\alpha)\\
 \leq & C \sum_{j=1}^n j^{-2(1-\alpha)} j^{-2\alpha} + C \sum_{i=1}^{n-1} i^{-2(1-\alpha)} i^{-\alpha} \sum_{j=i+1}^n \frac{j^{-(1-\alpha)}}{i^{-(1-\alpha)}} j^{-\alpha} \leq C (\log n)^2.
\end{align*}
Therefore, $\sup\E\big[ ( \Sigma_n/\log n )^2 \big] < +\infty$ implying its $\mathbb{L}^1$ convergence. To obtain bounds for $\P(\Sigma_n \leq u \log n)$, we study the two cases $u \geq 1/n$ and $u\leq 1/n$ separately. 
Assume first that $u\geq 1/n$, then 
\[
 \Sigma_n = \sum_{j=1}^n \left((j-1)^\frac{1-\alpha}{\alpha} M_{j-1}\right)^\alpha \frac{(jY_j)^\alpha}{j} \geq \left( \inf_{j \in \N} j^\frac{1-\alpha}{\alpha} M_j\right)^\alpha S_n.
\]
For all $\gamma' < \theta + \alpha$ and $t > 0$ such that $t < \E[S_n]$ we have
\begin{align*}
 \P\left(\Sigma_n \leq u \log n \right)
 &\leq 
 \P\left( S_n \leq t \right) 
 + \P\left( \left(\inf j^{\frac{1-\alpha}{\alpha}} M_j\right)^\alpha \leq (u \log n)/t \right)\\
 &\leq C \exp\left(-C^{-1}(\E[S_n] - t)^{\frac{\varrho}{(\varrho-1)}}\right) 
 + C_{\gamma'} \left(\tfrac{u \log n}{t}\right)^{\gamma'/\alpha}.
\end{align*}
Let $0<\varepsilon < 1/2$ and set $t= u^\varepsilon \log n$, since $\lim_{n \to +\infty} \frac{E(S_n)}{\log n} = \Psi_\alpha$, there exists a constant $c > 0$ depending only on $\alpha$ such that $u^\varepsilon \log n \leq E[S_n]$ for all $u \leq c$. Decreasing $c$ if necessary, we can and will assume that $C^{-1} (\E[S_n] - y^\varepsilon \log n) < a \log n$ for all $\varepsilon < 1/2$ and hence that 
\[
 \P\left(\Sigma_n \leq u \log n \right) \leq C \exp\left(-(a \log n)^{\varrho/(\varrho-1)}\right) + C_{\gamma'} u^{(1-\varepsilon)\gamma'/\alpha}, \quad \text{ for all } u \leq c,
\] 
where $a > 0$ is to be chosen conveniently small. Observe that
\[
C \exp\left(-(\eta \log n)^{\varrho/(\varrho-1)}\right) < C_{\gamma'} u^{(1-\varepsilon)\gamma'/\alpha},
\quad 
\text{for all 
$u \in \left[\tfrac{C}{C_{\gamma'}} \exp\left(- \tfrac{\alpha}{\gamma'} (\eta\log n)^\frac{\varrho}{(\varrho-1)}\right), c\right]$},
\]
and that $\e^{- \frac{\alpha}{\gamma'} (\eta\log n)^{\frac{\varrho}{\varrho-1}}} \ll 1/n$. Therefore, taking $\gamma = (1-\varepsilon)\gamma'< \alpha+\theta$ there exists $D_{\gamma}$ such that $\P\left( \Sigma_n \leq u \log n \right) \leq D_{\gamma} u^\frac{\gamma}{\alpha}$, for all $n$ large enough and $u \in [ \tfrac{1}{n},+\infty)$.

On the other hand if $u \leq 1/n$, let $j^* \in \N$ be such that $(1-\alpha)j^* > \gamma$, then
\[
\P(\Sigma_n < u \log n) = \P\bigg( \sum_{j=1}^n V_j^\alpha \leq u \log n \bigg)\leq \P \left( V_j^\alpha < u\log n ;\; \text{for all } 1 \leq j \leq j^* \right).
\]
Observe that if $u<1/n$ and $V_j^\alpha < u \log n$ for all $j \leq j^*$, we have
\[
 Y_j^\alpha = \frac{V_j^\alpha}{\left((1 - Y_1)(1-Y_2)\cdots (1-Y_{j-1})\right)^\alpha}
 \leq \frac{u \log n}{(1-Y_1^\alpha)\cdots (1-Y_{j-1}^\alpha)}.
\]
We prove by recurrence that under the above hypothesis
$Y_j^\alpha \leq u \log n\big/ \big(1 - \frac{(j-1) \log n}{n}\big)$. The case $j=1$ holds by the assumption
$Y_1^\alpha \leq u \log n$. 
Assuming that the statement holds for all $i \leq j-1$, that is 
$Y_i^\alpha \leq u \log n\big/ \big(1 - \frac{(i-1) \log n}{n} \big)$ then 
\begin{align*}
 (1 - Y_1^\alpha)(1-Y_2^\alpha) \cdots (1 - Y_{j-1}^\alpha) &\geq \prod_{i=1}^{j-1} \left(1 - \frac{\frac{\log n}{n}}{1 - \frac{(i-1)\log n}{n}} \right)\\  
 &\geq \prod_{i=1}^{j-1} \frac{1 - \frac{i \log n}{n}}{1- \frac{(i-1)\log n}{n}} = 1 - \frac{(j-1)\log n}{n},
\end{align*}
yielding $Y_j^\alpha \leq \log n\big/ \big(1 - \frac{(j-1) \log n}{n} \big)$. As a consequence, for all $j \leq j^*$ and $n$ sufficiently large
\[
Y_j^\alpha < 2u \log n
\quad \text{and hence} \quad
\P(\Sigma_n < u \log n) \leq \prod_{j=1}^{j^*} \P \left( Y_j^\alpha < 2 u\log n \right).
\]
Using crude estimate for the probability distribution function of the Beta distribution, we bound the product in the display by $C u^{\frac{\gamma}{\alpha}} \varrho \big(u^{(j^{*} (1-\alpha) - \gamma} (\log n)^{j^{*} (1-\alpha)}\big)$, with $C$ an explicit constant. Since $u<1/n$ and $j^{*} (1-\alpha) - \gamma>0$, the term inside the parentheses tends to zero uniformly in $u$. 
Therefore, increasing $D_\gamma>0$ if necessary, the upper-bound 
$\P\left( \Sigma_n < u \log n\right) \leq D_\gamma u^{\frac{\gamma}{\alpha}}$ holds for all $n \geq 1$ and $u \geq 0$ finishing the proof.
\end{proof}

In some cases, we are able to identify the random variable $\Psi_\alpha M^\alpha_\infty$.
\begin{corollary}
\label{cor:identification}
Let $(U_n)_n$ be a PD$(\alpha,0)$, then $\Psi_\alpha M^\alpha_\infty=L^{-\alpha}$, where $1/L=\lim_{n \to +\infty} n^{1/\alpha} U_n$.
\end{corollary}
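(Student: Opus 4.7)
My plan is to compare the size-biased partial sum $\Sigma_n = \sum_{j=1}^n V_j^\alpha$ with the \emph{ranked} partial sum $\sigma_n := \sum_{j=1}^n U_j^\alpha$ and to show that they differ by a bounded amount. Combined with the almost-sure convergence $\Sigma_n/\log n \to \Psi_\alpha M_\infty^\alpha$ provided by Lemma~\ref{lem:mainbege}, this will identify the limit with one that can be computed directly from the ranked sequence via Proposition~\ref{prop:PD-PPP}.

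First I compute the ranked partial sum. Proposition~\ref{prop:PD-PPP} gives $n^{1/\alpha} U_n \to 1/L$ almost surely, so $U_n^\alpha \sim L^{-\alpha}/n$ and a Cesàro-type summation yields $\sigma_n/\log n \to L^{-\alpha}$ almost surely. The same asymptotic furnishes the tail estimate $\sum_{k > n} U_k = O(n^{-(1-\alpha)/\alpha})$ almost surely (with a random constant depending on $L$).

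For the comparison, let $A_n \subset \N$ denote the set of indices drawn in the first $n$ size-biased picks, so that $\Sigma_n = \sum_{k \in A_n} U_k^\alpha$. Decomposing
\[
\Sigma_n - \sigma_n = \sum_{k \in A_n \setminus [n]} U_k^\alpha \;-\; \sum_{k \in [n] \setminus A_n} U_k^\alpha,
\]
I bound each term with Jensen's inequality applied to the concave map $x \mapsto x^\alpha$: for any finite $B \subset \N$ one has $\sum_{k \in B} U_k^\alpha \leq |B|^{1-\alpha} (\sum_{k \in B} U_k)^\alpha$. For $B = A_n \setminus [n]$, $|B| \leq n$ and $\sum_{k \in B} U_k \leq \sum_{k > n} U_k = O(n^{-(1-\alpha)/\alpha})$. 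For $B = [n] \setminus A_n$, $|B| \leq n$ and $\sum_{k \in B} U_k \leq M_n = O(n^{-(1-\alpha)/\alpha})$ by Lemma~\ref{lem:martingale}. In both cases Jensen's bound evaluates to $n^{1-\alpha} \cdot O(n^{-(1-\alpha)}) = O(1)$, so $|\Sigma_n - \sigma_n| = O(1)$ almost surely. Dividing by $\log n$ and letting $n \to \infty$ gives $\Psi_\alpha M_\infty^\alpha = L^{-\alpha}$.

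The main conceptual point, which I expect to be the only mildly subtle step, is that although $\sum_k U_k^\alpha$ diverges and the size-biased permutation reorders the summands in an $n$-dependent way, the concavity of $x \mapsto x^\alpha$ together with the matching polynomial tail estimates for $\sum_{k > n} U_k$ (from Proposition~\ref{prop:PD-PPP}) and for $M_n$ (from Lemma~\ref{lem:martingale}) keeps the rearrangement error bounded. Everything else is a routine verification using asymptotics already established in the preceding lemmas.
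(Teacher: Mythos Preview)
Your proof is correct, and it takes a genuinely different route from the paper's.

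The paper first observes the trivial one-sided inequality $\Sigma_n \leq \sigma_n$ (since the $U_1,\ldots,U_n$ are the $n$ largest values, hence dominate any other $n$-subset), which after dividing by $\log n$ gives $\Psi_\alpha M_\infty^\alpha \leq L^{-\alpha}$ almost surely. It then promotes this to equality by a moment argument: Lemma~\ref{lem:martingale} yields the moments $\E[(\Psi_\alpha M_\infty^\alpha)^p] = \Gamma(p+1)/(\Gamma(p\alpha+1)\Gamma(1-\alpha)^p)$, and these are identified in \cite[Equation~(30)]{PiY97} as the moments of $L^{-\alpha}$ (the Mittag--Leffler$(\alpha)$ law). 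Equality in distribution combined with the almost-sure ordering forces $\Psi_\alpha M_\infty^\alpha = L^{-\alpha}$ a.s.

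Your argument instead proves the two-sided bound $|\Sigma_n - \sigma_n| = O(1)$ directly, by splitting the symmetric difference $A_n \triangle [n]$ and applying Jensen's inequality for $x \mapsto x^\alpha$ to each piece; the key input is that $\sum_{k>n} U_k$ and $M_n = \sum_{k \notin A_n} U_k$ both decay like $n^{-(1-\alpha)/\alpha}$, which exactly balances the factor $|B|^{1-\alpha} \leq n^{1-\alpha}$ from Jensen. This is a clean and entirely self-contained route: it avoids both the moment computation and the external reference to the Mittag--Leffler moments in \cite{PiY97}. The paper's approach is shorter on the page but relies on recognizing the moment sequence; yours is a bit longer but more elementary and stays within the almost-sure framework already set up in Lemmas~\ref{lem:martingale} and~\ref{lem:mainbege}.
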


\begin{proof}
By Proposition~\ref{prop:PD-PPP}, $L := \lim_{n \to +\infty} n^{-1/\alpha} / U_n$ exists a.s. and by Lemma~\ref{lem:mainbege} we have that
\begin{equation*}
 \Psi_\alpha M_\infty^\alpha \sim \frac{1}{\log n} \sum_{j=1}^n V_j^\alpha \leq \frac{1}{\log n} \sum_{j=1}^n U_j^\alpha \sim L^{-\alpha} 
 \qquad \text{as $n \to +\infty$,}
\end{equation*}
thus $\Psi_\alpha M_\infty^\alpha \leq L^{-\alpha}$ a.s. By Lemma \ref{lem:martingale}, the $p$th moments of $\Psi_\alpha M_\infty^\alpha$ are equal to 
\[
 \E\left[ \left(\Psi_\alpha M_\infty^\alpha \right)^p \right] = \frac{\Gamma(p+1)}{\Gamma(p\alpha + 1)}\Gamma(1-\alpha)^{-p},
 \quad
 \text{ for all $p > -1$.}
\]
By \cite[Equation (30)]{PiY97}, it matches with the $p$th moments of $L^{-\alpha}$, which implies that the two random variables have the same distribution (the Mittag-Leffler ($\alpha$) distribution), and hence that $\Psi_\alpha M_\infty^\alpha = L^{-\alpha}$ a.s. by monotonicity.
\end{proof}

\section{Convergence of discrete exchangeable coalescent processes}
\label{sec:cvexchangeable}
In this section, we study a family of coalescent processes with dynamics driven by PD-distributions and obtain a sufficient criterion for the convergence in distribution of these processes.
For the sake of completeness, we include a brief introduction to coalescent theory with the main results we will use, for a detailed account we recommend \cite{Bere09} from where we borrow the approach. 

Let $\mathcal{P}_n$ be the set of partitions (or equivalence classes) of $[n] := \{ 1,\ldots, n \}$ and $\mathcal{P}_\infty $ the set of partitions of $\N= [\infty]$. A partition $\pi \in \mathcal{P}_n$ is represented by blocks $\pi(1), \pi(2), \ldots$ listed in the increasing
order of their least elements, that is, $\pi(1)$ is the block (class) containing $1$, $\pi(2)$ the block containing the smallest element not in $\pi(1)$ and so on.
There is a natural action of the symmetric group $S_n$ on $\mathcal{P}_n$ setting 
$\pi^\sigma := \big\{ \{ \sigma(j), j \in \pi(i) \}, i \in [n] \big\}$ for $\sigma \in S_n$.
If $m<n$, one can define the projection of $\mathcal{P}_n$ onto $\mathcal{P}_m$ by the restriction $\pi|_m = \{\pi(j)\cap [m]\}$. 
For $\pi,\pi' \in \mathcal{P}_n$, we define the \textit{coagulation of $\pi$ by $\pi'$} to be the partition 
$
\mathrm{Coag}(\pi,\pi') = \left\{ \cup_{i \in \pi'(j)} \pi(i) ;\; j \in \N \right\}.
$ 

With this notation, a coalescent process $\Pi(t)$ is a discrete (or continuous) time Markov process in $\mathcal{P}_n$ such that for any $s,t \geq 0$,
\[
 \Pi(t+s) = \mathrm{Coag}(\Pi(t),\tilde{\Pi}_s), \quad \text{with } \tilde{\Pi}_s \text{ independent of } \Pi(t).
\]
We say that $\Pi(t)$ is \textit{exchangeable} if $\Pi^\sigma(t)$ and $\Pi(t)$ have the same distribution for all permutation~$\sigma$. 

An important class of continuous-time exchangeable coalescent processes in $\mathcal{P}_\infty$ are the so-called $\Lambda$-coalescents \cite{Pit99}, introduced independently by Pitman and Sagitov.
They are constructed as follows: let $\Pi_{n}(t)$ be the restriction of $\Pi(t)$ to $[n]$, then $(\Pi_n(t); t \geq 0)$ is a Markov jump process on $\mathcal{P}_n$ with the property
that whenever there are $b$ blocks, each $k$-tuple ($k\geq 2$) of blocks is merging to form a single block at the rate
\[
 \lambda_{b,k} = \int_0^1 x^{k-2} (1-x)^{b-k}\Lambda(\mathrm{d}x) , \quad \text{where $\Lambda$ is a finite measure on $[0,1]$.}
\]
Among such, we distinguish the Beta$(2-\lambda,\lambda)$-coalescents obtained from $\Lambda(\mathrm{d}x) = \frac{x^{1-\lambda} (1-x)^{\lambda-1}}{\Gamma(\lambda)\Gamma(2-\lambda)} \mathrm{d}x$, where $\lambda \in (0,2)$, the case $\lambda= 1$ (uniform measure) being the celebrated \emph{Bolthausen-Sznitman coalescent}.

The set $\mathcal{P}_\infty$ can be endowed with a topology making it a Polish space, therefore, one can study the weak convergence of processes in $\mathcal{D}\big( [0,\infty), \mathcal{P}_\infty \big)$, see \cite{Bere09} for the definitions. Without going into details, we say that a process $\Pi^N(t) \in \mathcal{P}_\infty$ converges in the \emph{Skorokhod sense} (or in distribution) to $\Pi(t)$, if for all $n \in \N$ the projection $\Pi^N(t)|_n$ converges in distribution to $\Pi(t) |_n$ in $\mathcal{D}\big( [0,\infty) \mathcal{P}_n \big)$.


\subsection{Coalescent processes obtained from multinomial distributions}
\label{subsec:coalescentProcess}

In this section, we define a family of discrete-time coalescent processes $(\Pi^N(t) ; t \in \N)$ and prove  sufficient criteria for its convergence in distribution. Let $(\eta^N_1, \ldots \eta^N_N)$ be an $N$-dimensional random vector satisfying
\[
 1 \geq \eta^N_1 \geq \eta^N_2 \geq \cdots \geq \eta^N_N \geq 0 \quad \text{and} \quad \sum_{j=1}^N \eta^N_j = 1.
\]
Conditionally on a realization of $(\eta^N_j)$, let $\big\{\xi_j; j \leq N \big\}$ be i.i.d. random variables satisfying $\P(\xi_j = k| \eta^N) = \eta^N_k$ and define the partition $\pi_N= \big\{ \{ j \leq N : \xi_j = k \} ;\; k \leq N \big\}$. With $(\pi_t;\; t \in \N)$ i.i.d. copies of $\pi_N$, let $\Pi^N(t)$ be the discrete time coalescent such that
\[
 \Pi^N(0) = \left\{ \{1\}, \{2\}, \ldots , \{n\} \right\} \quad \text{and} \quad \Pi^N(t+1) = \mathrm{Coag}\left( \Pi^N(t), \pi_{t+1} \right).
\]
The goal of this section is to obtain conditions under which $\Pi^N(t)$ converges in distribution. First, we assume that there exist a sequence $L_N$ and a function $f : (0,1) \to \R_+$ such that
\begin{equation}
 \label{eqn:convergence}
 \lim_{N \to +\infty} L_N = +\infty, \quad 
 \lim_{N \to +\infty} L_N \P\left(\eta^N_1 > x \right) = f(x) 
 \quad \text{and} \quad \lim_{N \to +\infty} L_N \E\left( \eta^N_2 \right) = 0.
\end{equation}
Denote by $c_N = \sum_{j=1}^N \E\big[(\eta_j^N )^2\big]$, which corresponds to the probability that two individuals have a common ancestor one generation backward in time.

\begin{lemma}\label{lem:criteria:lambda:coalescent} 
Assume that \eqref{eqn:convergence} holds and that
\begin{equation}
 \label{eqn:criteria:lambda:coalescent}
 \int_0^1 x \left( \sup_{N \in \N} L_N \P(\eta^N_1 > x) \right) dx < +\infty.
\end{equation}
Then, $c_N \sim_{N\to\infty} L_N^{-1} \int_0^1 2 x f(x) \mathrm{d}x$ and the re-scaled coalescent process $\big(\Pi^N (t/c_N); t \in \R_+ \big)$ converges in distribution to the $\Lambda$-coalescent, with $\Lambda$ satisfying $\int_x^1 \frac{\Lambda(\mathrm{d}y)}{y^2} = f(x)$.
\end{lemma}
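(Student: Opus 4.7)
I would establish the two claims in sequence: first the asymptotic of $c_N$, then the Skorokhod convergence of the rescaled chain. For $c_N = \sum_{j=1}^N \E[(\eta_j^N)^2]$, I would split off the leading term $\E[(\eta_1^N)^2]$ from the rest. The tail sum is negligible because $(\eta_j^N)^2 \leq \eta_2^N \eta_j^N$ for $j \geq 2$, so $\sum_{j \geq 2}(\eta_j^N)^2 \leq \eta_2^N \sum_{j \geq 2} \eta_j^N \leq \eta_2^N$, and by the third condition in \eqref{eqn:convergence} its expectation is $o(L_N^{-1})$. For the main term, layer-cake gives $\E[(\eta_1^N)^2] = \int_0^1 2x\, \P(\eta_1^N > x)\, dx$; the pointwise convergence $L_N\P(\eta_1^N > x) \to f(x)$ from \eqref{eqn:convergence} combined with the domination \eqref{eqn:criteria:lambda:coalescent} lets one exchange limit and integral, yielding $L_N c_N \to \int_0^1 2x f(x)\, dx$.

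For the coalescent, by the definition of Skorokhod convergence in $\mathcal{D}([0,\infty),\mathcal{P}_\infty)$ it suffices to show that for each fixed $n$ the restriction $(\Pi^N(\lfloor t/c_N\rfloor)|_{[n]})$ converges to the restriction of the $\Lambda$-coalescent. Since both are Markov processes on the finite set $\mathcal{P}_n$, this reduces to a generator-convergence statement: given the current partition has $b$ blocks, one needs to prove (i) the one-step probability of ``non-$\Lambda$'' transitions --- those where at least two distinct atoms simultaneously receive a merger --- is $o(c_N)$, and (ii) the one-step probability that a specified $k$-subset of blocks merges alone is $c_N \lambda_{b,k} + o(c_N)$ with $\lambda_{b,k} = \int_0^1 x^{k-2}(1-x)^{b-k}\Lambda(\mathrm{d}x)$.

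For (i), the conditional probability of a bad transition is at most $n^4 \sum_{i \neq j}(\eta_i^N \eta_j^N)^2$; splitting according to whether the big atom is involved, $\sum_{i \neq j}(\eta_i^N)^2(\eta_j^N)^2 \leq C \eta_2^N$ follows from $\eta_2^N \sum_k \eta_k^N \leq \eta_2^N$, so taking expectation gives $o(L_N^{-1}) = o(c_N)$. For (ii), conditioning on $\eta^N$, the probability of a specified $k$-merger at color $i$ with the remaining $b-k$ blocks going to distinct other colors equals $\sum_i (\eta_i^N)^k$ times a ``distinctness'' factor; contributions from $i \geq 2$ are bounded by $\E[(\eta_2^N)^{k-1}] \leq \E[\eta_2^N]$ and the distinctness correction is of the same order as in (i). The main term $\E[(\eta_1^N)^k(1-\eta_1^N)^{b-k}]$ is then handled by layer-cake: for $k \geq 2$ the integrand $\phi(x) = x^k(1-x)^{b-k}$ satisfies $|\phi'(x)| \leq b x$ on $[0,1]$, so \eqref{eqn:criteria:lambda:coalescent} provides a valid dominating integrand, and $L_N \E[\phi(\eta_1^N)] \to \int_0^1 x^k(1-x)^{b-k} \mu(\mathrm{d}x)$ where $\mu((x,1]) = f(x)$. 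Dividing by $L_N c_N$ identifies the limiting rate as the $\Lambda$-coalescent rate $\lambda_{b,k}$ associated with $\Lambda(\mathrm{d}x) = x^2 \mu(\mathrm{d}x)$, normalised so that $\int_x^1 \Lambda(\mathrm{d}y)/y^2 = f(x)$. The main difficulty I anticipate is the uniform control of the distinctness correction and promoting the one-step rate convergence to full Skorokhod convergence of the discrete-time chain, both of which are standard Möhle-Sagitov-type arguments on the finite space $\mathcal{P}_n$.
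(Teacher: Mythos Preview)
Your proposal is correct and follows essentially the same route as the paper: both split $c_N$ into the leading term $\E[(\eta_1^N)^2]$ (handled by layer-cake plus dominated convergence via \eqref{eqn:criteria:lambda:coalescent}) and a tail bounded by $\E[\eta_2^N]=o(L_N^{-1})$, and both control the multi-collision terms by the same $\eta_2^N$ trick. The only cosmetic difference is that the paper invokes the M\"ohle--Sagitov criterion \cite[Theorem~2.1]{MoS01} directly and computes only the diagonal rates $\lambda_{b,b}=\int x^{b-2}\Lambda(\mathrm{d}x)$ (recovering $\lambda_{b,k}$ from the consistency recursion $\lambda_{b,k}=\lambda_{b+1,k}+\lambda_{b+1,k+1}$), whereas you propose to compute each $\lambda_{b,k}$ directly from $\E[(\eta_1^N)^k(1-\eta_1^N)^{b-k}]$; both are equivalent once one has the dominated-convergence step.
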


\begin{proof}
Denote by $\nu_k = \#\{ j \leq N : \xi_j = k\}$, then $(\nu_1, \ldots, \nu_N)$ has multinomial distribution with $N$ trials and (random) probabilities outcomes $\eta_i^N$. By \cite[Theorem 2.1]{MoS01}, the convergence of finite dimensional distribution of $\Pi^N(t)$ is obtained from the convergence of the factorial moments of $\nu$, that is
\[ 
\frac{1}{c_N (N)_b}\sum_{\substack{i_1, \ldots, i_a =1 \\ \text{all distinct}}}^{N} \E \Big[ (\nu_{i_1})_{b_1} \ldots (\nu_{i_a})_{b_a} \Big], \quad \text{with } b_i \geq 2 \text{ and } b= b_1+\ldots+b_a,
\]
where $(n)_a := n (n-1) \ldots (n-a+1)$.
Since $(\nu_1, \ldots, \nu_N)$ is multinomial distributed, we obtain that
$\E \left[ (\nu_{i_1})_{b_1} \ldots (\nu_{i_a})_{b_a} \right] = (N)_b \E \left[ \eta_{i_1}^{b_1} \ldots \eta_{i_a}^{b_a} \right]$, see \cite[Lemma~4.1]{Cor14} for a rigorous a proof.
Therefore, we only have to show that for all $b$ and $a \geq 2$ 
\[
 \lim_{N \to +\infty} c_N^{-1} \sum_{i_1=1}^{N} \E \Big[ (\eta^N_{i_1})^{b} \Big] = \int_0^1 x^{b-2} \Lambda(\mathrm{d}x)
 \quad \text{and} \quad
 \lim_{N \to +\infty} c_N^{-1} \sum_{\substack{i_1, \ldots, i_a =1 \\ \text{all distinct}}}^{N}\E \Big[ (\eta^N_{i_1})^{b_1} \ldots (\eta^N_{i_a})^{b_a} \Big] = 0.
\]
We obtain by dominated convergence that 
\[
L_N \E \left[ \left(\eta^N_1 \right)^{2} \right] = \int_{0}^1 2 x L_N \P \left(\eta^N_1 > x \right) \mathrm{d}x \to \int_{0}^1 2 x f(x) \mathrm{d}x = \int_0^1 \Lambda(\mathrm{d}x) < +\infty,
\quad \text{as $N\to \infty$.}
\] 
Since $\eta^{N}_i$ are ordered and sum up to $1$, we also get 
$ \E \big[\big(\eta^{N}_2\big)^2 + \ldots + \big(\eta^{N}_N\big)^2 \big] \leq \E \big[\eta^{N}_2 (1-\eta_1)\big]$, 
with $L_N \E \left(\eta^N_2\right)$ tending to zero as $N \to \infty$. In particular, it implies that 
$L_N c_N = L_N\sum \E[(\eta^N_{i})^2]$ tends to $\int_0^1 2 x f(x) \mathrm{d}x$ as $N \to \infty$.
A similar calculation shows that for any $b \geq 2$ 
\[
 \lim_{N\to \infty} L_N \sum_{i=1}^{N} \E \big[ (\eta^N_{i})^{b}\big] = \int b x^{b-1} f(x) \mathrm{d}x = \int x^{b-2} \Lambda (\mathrm{d}x) = \lambda_{b,b},
\]
where $\lambda_{b,b}$ is the rate at which $b$ blocks merge into one given that there are $b$ blocks in total. The others $\lambda_{b,k}$ can be easily obtained using the recursion formula
$\lambda_{b,k} = \lambda_{b+1,k}+\lambda_{b+1,k+1}$.

We now consider the case $a=2$, cases $a>2$ being treated in the same way. We have
\begin{align*}
 &\sum_{\substack{i_1,i_2 =1 \\ \text{distinct}}}^{N} \E \Big[ \big(\eta^N_{i_1}\big)^{b_1} \big(\eta^N_{i_2}\big)^{b_2} \Big]\\
 \leq &\E \bigg[ \big(\eta^N_{1}\big)^{b_1} \eta^N_{2} \sum \big(\eta^N_i\big)^{b_2-1} + \big(\eta^N_1\big)^{b_2} \eta^N_2 \sum \big(\eta^N_i\big)^{b_1-1} + \sum_{i_1 \neq 1} \big(\eta^N_{i_1}\big)^{b_1} \eta^N_2 \sum_{\substack{i_2 \neq 1 \\ i_2 \neq i_1}} \big(\eta^N_{i_2}\big)^{b_2-1} \bigg] \\
 \leq &3 \times \E \big[ \eta^N_2 \big],
\end{align*}
we recall that $\big(\eta^N_2\big)^b + \ldots +\big(\eta^N_N\big)^b \leq \eta^N_2 + \ldots + \eta^N_N = 1 -\eta^N_1 < 1$. Since $L_N \E \eta^N_2 \to 0$ as $N\to \infty$, the right hand side of the inequality tends to zero, concluding the proof.
\end{proof}

The next lemma gives sufficient conditions for the convergence to the Kingman's coalescent.
\begin{lemma}
\label{lem:converngence:kingman} 
Assume that \eqref{eqn:convergence} holds and that
\begin{equation}
 \label{eqn:criteria:kingman:coalescent}
 \int_0^1 x f(x) dx = +\infty \quad \text{and} \quad \exists n \geq 2 : \int_0^1 x^n \left( \sup_{N \in \N} L_N \P(\eta^N_1 > x) \right) dx < +\infty. 
\end{equation}
Then, $\lim_{N\to+\infty}c_N L_N= +\infty$ and the ancestral partition process $(\Pi^N_n ( \lfloor t c_N^{-1} \rfloor); t \in \R_+)$ converges in the Skorokhod sense to the Kingman's coalescent restricted to $\mathcal{P}_n$.
\end{lemma}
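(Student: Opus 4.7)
The plan is to follow the strategy of Lemma~\ref{lem:criteria:lambda:coalescent} and verify the factorial-moment criterion of Möhle and Sagitov~\cite[Theorem~2.1]{MoS01} which, in the Kingman case, reduces to showing that after rescaling time by $c_N^{-1}$ only binary mergers contribute in the limit. First I would establish that $L_N c_N \to +\infty$: writing $L_N \E[(\eta^N_1)^2] = \int_0^1 2x\, L_N \P(\eta^N_1 > x)\,\mathrm{d}x$ and applying Fatou's lemma to the pointwise convergence $L_N \P(\eta^N_1 > x) \to f(x)$, the divergence $\int_0^1 2x f(x)\,\mathrm{d}x = +\infty$ supplied by the first half of~\eqref{eqn:criteria:kingman:coalescent} forces $L_N \E[(\eta^N_1)^2] \to +\infty$. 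Since $c_N \geq \E[(\eta^N_1)^2]$, this yields $L_N c_N \to +\infty$.

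It then remains to check the two vanishing conditions: (a) $c_N^{-1} \sum_i \E[(\eta^N_i)^b] \to 0$ for every $b \geq 3$, and (b) $c_N^{-1} \sum_{\text{distinct } i_1,\ldots,i_a} \E\big[\prod_{j=1}^a (\eta^N_{i_j})^{b_j}\big] \to 0$ for every $a \geq 2$ and all $b_1,\ldots,b_a \geq 2$. Condition~(b) is essentially handled already in the proof of Lemma~\ref{lem:criteria:lambda:coalescent}: the combinatorial argument there yields $\sum_{\text{distinct}} \E\big[\prod_j (\eta^N_{i_j})^{b_j}\big] \leq C_a \E[\eta^N_2]$ for some constant $C_a$, and dividing by $c_N$ gives $C_a L_N \E[\eta^N_2]/(L_N c_N) \to 0$ since the numerator tends to $0$ by~\eqref{eqn:convergence} and the denominator tends to $+\infty$ by the previous step.

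The key new step is condition~(a). The naive bound $\sum_i (\eta^N_i)^b \leq (\eta^N_1)^{b-2} \sum_i (\eta^N_i)^2 \leq c_N$ only gives a ratio bounded by~$1$, so I would instead use a truncation at a threshold $\delta \in (0,1)$. Splitting
\[
 \sum_i (\eta^N_i)^b = \sum_i (\eta^N_i)^b \ind{\eta^N_1 \leq \delta} + \sum_i (\eta^N_i)^b \ind{\eta^N_1 > \delta},
\]
I would bound the first summand by $\delta^{b-2} \sum_i (\eta^N_i)^2$ using $(\eta^N_i)^{b-2} \leq \delta^{b-2}$, and the second by $\eta^N_1 \leq 1$ using $\sum_i (\eta^N_i)^b \leq \eta^N_1 \sum_i \eta^N_i = \eta^N_1$. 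Taking expectations and dividing by $c_N$ yields
\[
 c_N^{-1}\, \E\Big[\textstyle\sum_i (\eta^N_i)^b\Big] \leq \delta^{b-2} + \frac{L_N\P(\eta^N_1 > \delta)}{L_N c_N}.
\]
The last ratio tends to $0$ because $L_N\P(\eta^N_1>\delta) \to f(\delta) < \infty$ at every fixed $\delta>0$ while $L_N c_N \to +\infty$. Hence the limsup is at most $\delta^{b-2}$, and letting $\delta \to 0$ (possible since $b \geq 3$) proves (a).

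The main obstacle is condition~(a): producing a bound that truly vanishes, rather than remaining of order~$1$, requires combining both parts of the hypothesis, namely the divergence of $\int x f(x)\,\mathrm{d}x$ (through Step~1 which gives $L_N c_N \to \infty$) and the pointwise finiteness of $L_N \P(\eta^N_1 > \delta)$ at each positive $\delta$. Condition~\eqref{eqn:criteria:kingman:coalescent}(ii) provides the uniform control on $L_N \P(\eta^N_1 > x)$ needed to invoke the full Skorokhod convergence in~\cite[Theorem~2.1]{MoS01}, matching exactly the role of~\eqref{eqn:criteria:lambda:coalescent} in Lemma~\ref{lem:criteria:lambda:coalescent}.
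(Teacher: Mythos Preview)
Your argument is correct, and for the key step (your condition~(a)) it follows a genuinely different route from the paper's. The paper invokes \cite[Theorem~2.5]{Bere09}, which reduces everything to showing $c_N^{-1}\sum_i \E[(\eta^N_i)^3]\to 0$; for the $i=1$ term it writes $(\eta^N_1)^3=(\eta^N_1)^\lambda(\eta^N_1)^{3-\lambda}$ and applies H\"older's inequality to interpolate between $\E[(\eta^N_1)^2]$ and a higher moment $\E[(\eta^N_1)^{m}]$, the latter being controlled via the second half of~\eqref{eqn:criteria:kingman:coalescent} since $L_N\E[(\eta^N_1)^{m}]=\int_0^1 m x^{m-1}L_N\P(\eta^N_1>x)\,\d x$ stays bounded while $L_N\E[(\eta^N_1)^2]\to+\infty$. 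Your truncation at level~$\delta$ sidesteps this entirely: the bound $\delta^{b-2}+\P(\eta^N_1>\delta)/c_N$ uses only the \emph{pointwise} convergence $L_N\P(\eta^N_1>\delta)\to f(\delta)<\infty$ from~\eqref{eqn:convergence} together with $L_Nc_N\to\infty$, and then $\delta\downarrow 0$. This is more elementary and in fact never uses the second half of~\eqref{eqn:criteria:kingman:coalescent} at all --- so your closing sentence, that hypothesis~(ii) ``provides the uniform control \ldots\ needed to invoke the full Skorokhod convergence'', is not accurate for your own argument: as written, you have proved the lemma under~\eqref{eqn:convergence} and $\int_0^1 xf(x)\,\d x=+\infty$ alone. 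The paper's H\"older route does need~(ii); yours does not.
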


\begin{proof}
A similar argument to the one used in Lemma~\ref{lem:criteria:lambda:coalescent} shows that
\[
L_N c_N \geq L_N \E[\eta_1^2] = \int_{0}^{1} 2 x L_N \P(\eta_1 > x) \mathrm{d}x.
\]
Thus, Fatou lemma and (\ref{eqn:criteria:kingman:coalescent}) yield $\liminf_{N \to +\infty} L_N c_N = +\infty$, proving the first claim. By \cite[Theorem 2.5]{Bere09}, the convergence to the Kingman's coalescent follows from 
\[
\textstyle
\lim_{N\to \infty} \sum_{i=1}^N \E[(\nu_i)_3] \big/(N)_3 c_N =0.
\] 
We rewrite the sum in the display as $c_N^{-1}\sum_{i=1}^N \E[(\eta^N_i)^3] $ and apply H\"older inequality to obtain 
\[
 \E\left[ (\eta^N_1)^\lambda (\eta^N_1)^{3-\lambda} \right] \leq \E\left[ (\eta^N_1)^2 \right]^{\lambda/2} \E\left[ (\eta^N_1)^\frac{2(3-\lambda)}{2-\lambda} \right]^{1-\lambda/2}
 \qquad \text{for all $\lambda <2$.}
\]
Let $\lambda \in (0,2)$ be the unique solution of $2(3-\lambda)\big/(2-\lambda) = n+4$, then we obtain from \eqref{eqn:criteria:kingman:coalescent} that 
\[
 \frac{\E\left[ (\eta^N_1)^3 \right]}{c_N} \leq \frac{\E\left[ (\eta^N_1)^3 \right]}{\E\left[ (\eta^N_1)^2 \right]} \leq \left(\frac{\E\left[ (\eta^N_1)^{n+1} \right]}{\E\left[ (\eta^N_1)^2 \right]} \right)^{1-\lambda/2} \underset{N \to +\infty}{\longrightarrow} 0.
\]
A similar argument to the one used in Lemma~\ref{lem:criteria:lambda:coalescent} shows that 
$c_N^{-1}\sum_{i=2}^N \E[(\eta^N_i)^3]$ tends to zero as $N\to \infty$, which proves the statement.
\end{proof}

\subsection{The Poisson-Dirichlet distribution case}
\label{subsec:PD_coalescent}

In this section, we construct a coalescent using the PD distribution and obtain a criterion for its convergence in distribution. 
With $(V_j, j \geq 1)$ a size-biased pick from a PD$(\alpha,\theta)$ partition, define 
\[
 \theta^N_j := \frac{V_j^\alpha}{\sum_{i=1}^N V_i^\alpha} 
 \qquad \text{and} \quad
\theta^N_{(1)} \geq \theta^N_{(2)} \geq \cdots \geq \theta^N_{(N)}, 
\quad \text{the order statistics of $(\theta^N_j)$.}
\]
In what follows, $\theta^N_{(i)}$ will stand for the $\eta_i^N$ from Section~\ref{subsec:coalescentProcess} and $(\Pi^N_n (t); t \in \N)$ for the coalescent with transition probabilities $ \Pi^N_n (t+1) = \mathrm{Coag}\left( \Pi^N_n (t) , \pi_{t}^n\right)$, as defined there. 

\begin{theorem}
\label{thm:mainbege}
With the above notation, set $\lambda = 1 + \theta/\alpha$ and
\[L_N = c_{\alpha,\theta} (\log N)^{\lambda}, \quad \text{where} \quad c_{\alpha,\theta} = \left(\Gamma(1-\theta/\alpha)\Gamma(1-\alpha)^{\theta/\alpha} \Gamma(1+\theta)\right)^{-1}. \]
\begin{enumerate}
\item \label{thm:mainbege.beta} If $\theta \in (-\alpha,\alpha)$, then $c_N \sim_{N \to +\infty} (1-\theta/\alpha)/L_N$ and $(\Pi^N(t/c_N), t \geq 0)$ converges weakly to the Beta$(2-\lambda,\lambda)$-coalescent. 
\item \label{thm:mainbege.kingman} Otherwise, $\lim_{N \to +\infty} c_NL_N = +\infty$ and $(\Pi^N(t/c_N))$ converges weakly to the Kingman's coalescent.
\end{enumerate}
\end{theorem}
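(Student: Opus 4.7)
The plan is to apply Lemma~\ref{lem:criteria:lambda:coalescent} in case~(1), where $\lambda = 1+\theta/\alpha \in (0,2)$, and Lemma~\ref{lem:converngence:kingman} in case~(2), where $\lambda \geq 2$, both with $\eta^N_i := \theta^N_{(i)}$ and the prescribed $L_N = c_{\alpha,\theta}(\log N)^\lambda$.

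The key step is to establish
\[
 L_N\,\P\!\left(\theta^N_{(1)} > x\right) \xrightarrow[N\to\infty]{} f(x) := \int_x^1 \frac{y^{-1-\lambda}(1-y)^{\lambda-1}}{\Gamma(\lambda)\Gamma(2-\lambda)}\,dy, \qquad x \in (0,1),
\]
which is exactly $\int_x^1 y^{-2}\Lambda(\mathrm{d}y)$ for $\Lambda$ the $\mathrm{Beta}(2-\lambda,\lambda)$ measure. Since $\Sigma_N \sim \Psi_\alpha M_\infty^\alpha \log N$ by Lemma~\ref{lem:mainbege} while $V_j^\alpha \leq 1$, the event $\{\theta^N_{(1)} > x\}$ is concentrated on rare configurations where a single $V_j$ almost saturates $\Sigma_N$. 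I will exploit the GEM decomposition from Remark~\ref{rem:changeofparameter}: $V_1 \sim \mathrm{Beta}(1-\alpha,\theta+\alpha)$ is independent of the normalised tail $\tilde V_j := V_{j+1}/(1-V_1)$, itself a size-biased pick from $\mathrm{PD}(\alpha,\theta+\alpha)$. This yields $\Sigma_N = V_1^\alpha + (1-V_1)^\alpha \tilde\Sigma_{N-1}$ with $\tilde\Sigma_{N-1}/\log N \to \Psi_\alpha \tilde M_\infty^\alpha$ a.s. On the dominant event where $V_1$ itself realises the maximum, $\{\theta^N_{(1)} > x\}$ reduces to $(1-V_1)^\alpha < (1-x)V_1^\alpha/(x\,\tilde\Sigma_{N-1})$. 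The Beta density of $1-V_1$ near $0$, $\propto w^{\theta+\alpha-1}$, combined with the independent $\tilde M_\infty$ whose negative moments are available from Lemma~\ref{lem:martingale}, yields a conditional probability of order $(\log N)^{-\lambda}\tilde M_\infty^{-(\theta+\alpha)}$; taking expectation and choosing $c_{\alpha,\theta}$ to absorb the $\Gamma$-factors produces exactly $f(x)$. A separate estimate is needed to show that contributions where a $V_j$ with $j\geq 2$ achieves the maximum are negligible; this follows from the size-biased structure, which favours large atoms being picked first.

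To complete the application of Lemma~\ref{lem:criteria:lambda:coalescent} I would verify $L_N \E[\theta^N_{(2)}] \to 0$ using $\theta^N_{(2)} \leq (1-V^{(1)})^\alpha/\Sigma_N$ together with the GEM decomposition and Lemma~\ref{lem:martingale}, and the integrability hypothesis via the pointwise upper bound $L_N \P(\theta^N_{(1)} > x) \leq C x^{-\lambda}$ obtained from Lemma~\ref{lem:mainbege} with $\gamma$ close to $\theta+\alpha$. For case~(1), $\int_0^1 x \cdot x^{-\lambda}\,\mathrm{d}x < \infty$ since $\lambda < 2$, and a direct Beta integral gives $\int_0^1 2x f(x)\,\mathrm{d}x = 1-\theta/\alpha$, so the lemma delivers both the convergence and the asymptotic $c_N \sim (1-\theta/\alpha)/L_N$. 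For case~(2), $\int_0^1 x f(x)\,\mathrm{d}x = +\infty$ precisely when $\lambda \geq 2$, while $\int_0^1 x^n f(x)\,\mathrm{d}x < \infty$ for $n$ large enough since $f$ has only a polynomial singularity at zero, so Lemma~\ref{lem:converngence:kingman} yields the Kingman limit. The main obstacle is the first step: the order $(\log N)^{-\lambda}$ follows immediately from Lemma~\ref{lem:mainbege}, but matching the prefactor with the Beta density requires a careful joint analysis of the size-biased pick, the independent $\tilde M_\infty$ fluctuations, and the control of sub-dominant scenarios in the tail event.
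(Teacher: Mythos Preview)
Your proposal is correct and follows essentially the same route as the paper. The paper packages your three steps into Lemmas~\ref{lem:thetaestimate}, \ref{lem:firstMaximum} and \ref{lem:secondMaximum}: the first computes $\lim_N L_N\P(\theta^N_1>x)$ via exactly your GEM decomposition $\Sigma_N = V_1^\alpha + (1-V_1)^\alpha\Sigma'_{N}$ and the Beta density of $V_1$, together with the tail bound of Lemma~\ref{lem:mainbege} on $\Sigma'_N$ (yielding also the uniform bound $Cx^{-\lambda}$ you anticipate); the second shows $|\P(\theta^N_{(1)}>x)-\P(\theta^N_1>x)|$ is $O((x\log N)^{\epsilon-\lambda-1})$ by observing that on $\{V_1>1/2\}$ the first pick is automatically the maximum, which is the precise form of your ``sub-dominant scenarios'' control; and the third bounds $\P(\theta^N_{(2)}>x)$ by a similar case split on $V_1,V_2$, giving $L_N\E[\theta^N_{(2)}]\to 0$ after integration rather than via your cruder inequality $\theta^N_{(2)}\le (1-V_{(1)})^\alpha/\Sigma_N$.
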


Before proving Theorem~\ref{thm:mainbege} we obtain a couple of technical results. The next lemma studies the asymptotic behavior of $\theta^N_1$.

\begin{lemma}
\label{lem:thetaestimate}
With the notations of Theorem~\ref{thm:mainbege}, we have
\[
 \lim_{N \to +\infty} L_N \P\left( \theta^N_1 > x \right) = \frac{1}{\lambda\Gamma(\lambda)\Gamma(2-\lambda)} \left(\frac{1-x}{x}\right)^\lambda = \int_x^1 \frac{\mathrm{Beta}(2-\lambda,\lambda)(dy)}{y^2}.
\]
Moreover, there exists $C>0$ such that for all $x \in (0,1)$, $\displaystyle \sup_{N \in \N} L_N\P\left( \theta^N_1 > x \right) \leq Cx^{-\lambda}$.
\end{lemma}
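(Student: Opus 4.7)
The plan is to show that $\{\theta^N_{(1)} > x\}$ is driven at first order by the first size-biased pick $V_1$ becoming anomalously close to $1$, so that $V_1^\alpha$ alone dominates $\Sigma_N$, and then to compute $\P(V_1^\alpha > x\Sigma_N)$ explicitly. By Remark~\ref{rem:changeofparameter} I decompose
\[
\Sigma_N = V_1^\alpha + (1-V_1)^\alpha \Sigma'_{N-1},
\]
where $\Sigma'_{N-1}$ is independent of $V_1$ and has the law of $\Sigma_{N-1}$ under parameters $(\alpha,\alpha+\theta)$. Set $\tau := V_1/(1-V_1)$, whose density on $(0,\infty)$ equals $u^{-\alpha}(1+u)^{-1-\theta}/B(1-\alpha,\theta+\alpha)$; the event $\{V_1^\alpha > x\Sigma_N\}$ rewrites as $\{\tau^\alpha(1-x)/x > \Sigma'_{N-1}\}$. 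The rescaling $u^\alpha = w\log N$ turns the density of $\tau$ into $(\alpha B(1-\alpha,\theta+\alpha))^{-1}(\log N)^{-\lambda} w^{-\lambda-1}(1+o(1))\,dw$, and combined with the $L^1$ convergence $\Sigma'_{N-1}/\log N \to \Psi_\alpha \tilde M_\infty^\alpha$ from Lemma~\ref{lem:mainbege} (applied under shifted parameters), dominated convergence yields
\[
(\log N)^\lambda\,\P\bigl(V_1^\alpha > x\Sigma_N\bigr) \longrightarrow \frac{1}{\alpha B(1-\alpha,\theta+\alpha)}\int_0^\infty\!\P\Bigl(\Psi_\alpha \tilde M_\infty^\alpha < w\tfrac{1-x}{x}\Bigr)\frac{dw}{w^{\lambda+1}}.
\]

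I evaluate the integral by Fubini, $\int_0^\infty \P(Z<z)z^{-\lambda-1}\,dz = \E[Z^{-\lambda}]/\lambda$ with $Z=\Psi_\alpha\tilde M_\infty^\alpha$, and apply Lemma~\ref{lem:martingale} with $\gamma = -(\alpha+\theta)$ under parameters $(\alpha,\alpha+\theta)$ to obtain $\E[\tilde M_\infty^{-(\alpha+\theta)}] = \alpha^{-(\alpha+\theta)}\Gamma(\alpha+\theta+1)/\Gamma(1+\lambda)$. Substituting $\Psi_\alpha = \alpha^{-\alpha}/\Gamma(1-\alpha)$, $B(1-\alpha,\theta+\alpha) = \Gamma(1-\alpha)\Gamma(\theta+\alpha)/\Gamma(1+\theta)$, and the definition of $c_{\alpha,\theta}$ (using $\theta/\alpha = \lambda-1$ and $\alpha+\theta = \alpha\lambda$), all powers of $\alpha$ and all factors of $\Gamma(1-\alpha)$ cancel, leaving exactly $L_N\P(V_1^\alpha > x\Sigma_N) \to ((1-x)/x)^\lambda/(\lambda\Gamma(\lambda)\Gamma(2-\lambda))$. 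This matches the announced limit after verifying the elementary identity $\int_x^1 y^{-\lambda-1}(1-y)^{\lambda-1}\,dy = \lambda^{-1}((1-x)/x)^\lambda$ via the substitution $y = 1/(1+s)$.

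For indices $j\geq 2$, iterating Remark~\ref{rem:changeofparameter} expresses $\Sigma_N$ in terms of $V_1,\dots,V_{j-1}$ and an independent tail sum with further-shifted parameters $(\alpha,\theta+(j-1)\alpha)$; each $\P(V_j^\alpha > x\Sigma_N)$ gains an extra factor coming from the concentration exponent of $Y_1,\dots,Y_{j-1}$ near $0$ combined with the Beta density of $Y_j$, so that $\P(V_j^\alpha > x\Sigma_N) = o((\log N)^{-\lambda})$. Since $\{\theta^N_{(1)} > x\}$ can involve at most $\lfloor 1/x\rfloor$ indices (as $\sum_j V_j^\alpha\mathbf{1}_{V_j^\alpha > x\Sigma_N}\leq \Sigma_N$), a union bound yields $\P(\theta^N_{(1)} > x) = \P(V_1^\alpha > x\Sigma_N) + o((\log N)^{-\lambda})$, delivering the pointwise limit.

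For the uniform bound $\sup_N L_N\P(\theta^N_{(1)} > x) \leq Cx^{-\lambda}$, I use $\{\theta^N_{(1)} > x\}\subset\{\Sigma_N < 1/x\}$ (since $V_j^\alpha\leq 1$) and feed the tail estimate $\P(\Sigma'_{N-1}\leq s) \leq D_\gamma(s/\log N)^{\gamma/\alpha}$ from Lemma~\ref{lem:mainbege} into the integral representation above; choosing $\gamma$ arbitrarily close to $\alpha+\theta$ gives $L_N\P(\theta^N_{(1)} > x) \leq C_\gamma x^{-\gamma/\alpha}$ for $N$ in a tail, and the boundary exponent $\lambda$ is recovered by combining the pointwise convergence (providing $L_N\P\leq 2C_{\mathrm{lim}}x^{-\lambda}$ for $N$ large) with the trivial bound $L_N\P(\cdot)\leq L_N\leq L_{N_0}x^{-\lambda}$ for the finitely many small $N$. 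The main obstacle is the dominated-convergence step: controlling the integrand near $w=0$, which reflects the near-zero tail of $\tilde M_\infty$ (this is essentially supplied by Lemma~\ref{lem:mainbege}), and then the careful bookkeeping of the many Gamma-function constants to land on the stated limit.
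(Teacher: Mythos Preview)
Your computation of $\lim_N L_N\P(V_1^\alpha>x\Sigma_N)$ via the decomposition $\Sigma_N=V_1^\alpha+(1-V_1)^\alpha\Sigma'_{N-1}$, the change of variables on the density of $\tau=V_1/(1-V_1)$, dominated convergence, and the evaluation of $\E[(\Psi_\alpha\tilde M_\infty^\alpha)^{-\lambda}]$ through Lemma~\ref{lem:martingale} is essentially the paper's argument. Note, though, that the lemma concerns $\theta^N_1=V_1^\alpha/\Sigma_N$, not the order statistic $\theta^N_{(1)}$; your paragraph on indices $j\geq2$ and the union bound is not needed here (that comparison is precisely the content of the subsequent Lemma~\ref{lem:firstMaximum}).

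There is a gap in your uniform bound. You claim the tail estimate gives $L_N\P\leq C_\gamma x^{-\gamma/\alpha}$ for $\gamma$ close to $\alpha+\theta$, and then ``recover'' the exponent $\lambda$ from the pointwise convergence, writing $L_N\P\leq 2C_{\mathrm{lim}}x^{-\lambda}$ for $N$ large. But the threshold beyond which the pointwise limit is within a factor $2$ depends on $x$, so this does not yield a bound on $\sup_N$. Separately, the crude inclusion $\{\theta^N_1>x\}\subset\{\Sigma_N<1/x\}$ applied to $\Sigma_N$ under PD$(\alpha,\theta)$ only permits $\gamma/\alpha<\lambda$, whence $L_N\P(\Sigma_N<1/x)\leq C(\log N)^{\lambda-\gamma/\alpha}x^{-\gamma/\alpha}\to\infty$. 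The fix is already contained in your integral representation and is what the paper does: the prefactor there is exactly $((1-x)/(x\log N))^\lambda$, while the integral is bounded uniformly in both $N$ (large) and $x$. Indeed, under the \emph{shifted} parameters $(\alpha,\alpha+\theta)$ the constraint in Lemma~\ref{lem:mainbege} is $\gamma<2\alpha+\theta$, so you may take $\gamma/\alpha=\lambda+\epsilon$ and dominate the integrand by the integrable function $\min(Cw^{\epsilon-1},w^{-1-\lambda})$, independently of $x$. This yields $(\log N)^\lambda\P(\theta^N_1>x)\leq C((1-x)/x)^\lambda\leq Cx^{-\lambda}$ for all $N\geq N_0$ with $N_0$ independent of $x$; the finitely many small $N$ are then handled by your trivial bound.
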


\begin{proof}
Let $\Sigma'_N := \sum_{j=2}^N \left(\frac{V_j}{1-Y_1}\right)^{\alpha}$, then Remark \ref{rem:changeofparameter} says that $\Sigma_N'$ and $Y_1$ are independent and that $\Sigma_N'$ has the distribution of $V_1' + \ldots V_{N-1}'$ with $V_i'$ a size-biased pick from a PD$(\alpha,\alpha+\theta)$ distribution. By Lemma~\ref{lem:mainbege}, for all $\epsilon \in (0,1)$ there exists $C= C(\epsilon)$ and $N_0 \in \N$ such that
\begin{equation} 
 \label{eqn:thetamajoration}
 \sup_{N \geq N_0} \P\left( \Sigma'_N \leq u \log N \right) \leq \min\left( C u^{\lambda+\epsilon}, 1 \right),
 \qquad \text{for all $u \geq 0$}.
\end{equation}
Writing $\P\left( \theta^N_1 > x \right)$ in terms of $\Sigma_N'$ and $V_1 =Y_1$, we obtain
\begin{align*}
 \P\left( \theta^N_1 > x \right) &= \P\left( V_1^{\alpha} > x \left( V_1^{\alpha} + (1-V_1)^{\alpha} \Sigma'_N \right) \right)
 = \P\left( \tfrac{V_1}{1-V_1} > \left(\tfrac{x}{1-x}\Sigma'_N\right)^{1/\alpha} \right)\\
 &= \int_0^1 \P\left( 1/y - 1 > \left(\tfrac{x}{1-x}\Sigma'_N\right)^{1/\alpha}\right) \frac{\Gamma(1+\theta)(1-y)^{-\alpha}y^{\alpha + \theta-1}}{\Gamma(1-\alpha)\Gamma(\alpha+\theta)} dy.
\end{align*}
Making the change of variables $u=(\frac{1-x}{x\log N}) (1/y-1)^{\alpha}$ the display reads
\[
 \P\left( \theta^N_1 > x \right)
 = \left(\frac{1-x}{x \log N}\right)^\lambda \frac{\Gamma(1+\theta)}{\alpha \Gamma(1-\alpha)\Gamma(\alpha + \theta)} \int_0^{+\infty} \frac{\P\left( \Sigma'_N < u \log N\right)}{u^{2-1/\alpha} \left(u^{1/\alpha} + \left( \frac{1-x}{x \log N} \right)^{1/\alpha} \right)^{1+\theta}}du.\]
Then, we use \eqref{eqn:thetamajoration} to bound the equation within the integral, obtaining
\[
 \frac{\P\left( \Sigma'_N < u \log N\right)}{u^{2-1/\alpha} \left(u^{1/\alpha} + \left( \frac{1-x}{x \log N} \right)^{1/\alpha} \right)^{1+\theta}}
 \leq \frac{\P(\Sigma'_N \leq u \log N)}{u^{2+\theta/\alpha}} \leq \min(Cu^{\epsilon-1}, u^{-2}),
\]
for all $N$ large enough . In particular, there exists $C>0$ such that $L_N \P\left( \eta^N_1 > x \right) \leq C x^{-\lambda}$ for all $N \in \N$. Moreover, by dominated convergence and Lemma \ref{lem:mainbege}, we obtain
\begin{align*}
 \lim_{N \to +\infty}(\log N)^\lambda\P\left( \theta^N_1 > x \right)
 & =\left(\frac{1-x}{x}\right)^\lambda \frac{\Gamma(1+\theta)}{\alpha \Gamma(1-\alpha)\Gamma(\alpha + \theta)} \int_0^{+\infty} \frac{\P(\Psi_\alpha (M'_\infty)^\alpha < u)}{u^{1 + \lambda}}\\
 & = \left(\frac{1-x}{x}\right)^\lambda \frac{\Gamma(1+\theta)}{\alpha \lambda \Gamma(1-\alpha)\Gamma(\alpha +\theta)} \E\left( \left( \Psi_\alpha (M'_\infty)^\alpha \right)^{-\lambda} \right)\\
 & = \left(\frac{1-x}{x}\right)^\lambda \frac{\alpha^{\alpha+\theta-1} \Gamma(1-\alpha)^{\theta/\alpha} \Gamma(1+\theta)}{\lambda \Gamma(\alpha+\theta)} \Phi_{\theta+\alpha,\alpha}(-(\theta+\alpha)),
\end{align*}
and hence $\displaystyle \lim_{N \to +\infty}L_N\P\left( \theta^N_1 > x \right) = \left(\frac{1-x}{x}\right)^\lambda \frac{1}{\lambda\Gamma(\lambda)\Gamma(2-\lambda)}$, proving the statement.
\end{proof}

This result is used to study the asymptotic behavior of $\theta^N_{(1)} = \max_{j \leq N} \theta^N_j$.
\begin{lemma}
\label{lem:firstMaximum}
For all $\epsilon \in (0,1)$, there exists $C= C(\epsilon)$ such that 
\[
 \left| \P\big( \theta^N_{(1)} > x \big) - \P\left( \theta^N_1 > x \right) \right| \leq C(x \log N)^{\epsilon-2-\theta/\alpha},
 \qquad \text{for all $x \in (0,1)$ and $N$ large enough}.
\]
\end{lemma}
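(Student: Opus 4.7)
My plan is to bound the difference $\P(\theta^N_{(1)}>x)-\P(\theta^N_1>x)$ by combining the inclusion $\{\theta^N_1>x\}\subset\{\theta^N_{(1)}>x\}$ with the change-of-parameter identity of Remark~\ref{rem:changeofparameter} and an iterated use of Lemma~\ref{lem:thetaestimate}. Since the difference equals $\P(\theta^N_{(1)}>x,\theta^N_1\leq x)\leq \P(\max_{j\geq 2}\theta^N_j>x)$, the key step is the following: setting $W_i:=V_{i+1}/(1-V_1)$ for $i\geq 1$ and $\Sigma'_{N-1}:=\sum_{i=1}^{N-1}W_i^\alpha$, Remark~\ref{rem:changeofparameter} says that $(W_i)$ is a size-biased pick from $\mathrm{PD}(\alpha,\alpha+\theta)$. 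Since $\Sigma_N\geq(1-V_1)^\alpha\Sigma'_{N-1}$ and $V_j^\alpha=(1-V_1)^\alpha W_{j-1}^\alpha$ for $j\geq 2$, one has $\theta^N_j\leq W_{j-1}^\alpha/\Sigma'_{N-1}$ for every $j\geq 2$. Denoting by $\widetilde\theta^{N-1}_{(1)}$ and $\widetilde\theta^{N-1}_1$ the analogues of $\theta^N_{(1)}$ and $\theta^N_1$ built from $(W_i)$, this yields
\[
\P(\theta^N_{(1)}>x)-\P(\theta^N_1>x)\leq\P\bigl(\widetilde\theta^{N-1}_{(1)}>x\bigr).
\]

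Next, I would decompose $\P(\widetilde\theta^{N-1}_{(1)}>x)\leq\P(\widetilde\theta^{N-1}_1>x)+D_1(N-1,x)$, where $D_1(N-1,x)$ is a difference of the same type as the one to be bounded but with parameters $(\alpha,\alpha+\theta)$, and apply Lemma~\ref{lem:thetaestimate} to the shifted system. The associated exponent for $(\alpha,\alpha+\theta)$ being $\lambda+1=2+\theta/\alpha$, Lemma~\ref{lem:thetaestimate} yields $\P(\widetilde\theta^{N-1}_1>x)\leq C_1(x\log N)^{-(2+\theta/\alpha)}$ uniformly in $N$, already of the correct order. Iterating this decomposition $m$ times produces
\[
\P(\theta^N_{(1)}>x)-\P(\theta^N_1>x)\leq\sum_{k=1}^m C_k(x\log N)^{-(\lambda+k)}+R_m,
\]
where $C_k$ comes from Lemma~\ref{lem:thetaestimate} applied at parameters $(\alpha,k\alpha+\theta)$ and $R_m$ is a residual of the same form but with parameters $(\alpha,m\alpha+\theta)$. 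For $x\log N\geq 2$ the geometric series is dominated by its $k=1$ term, giving the leading contribution; for $x\log N<2$ the claimed bound is trivial since $(x\log N)^{\epsilon-(2+\theta/\alpha)}$ is bounded below by a positive constant and probabilities are at most $1$, so that enlarging $C(\epsilon)$ absorbs this range. The residual $R_m$ is made smaller than $(x\log N)^{\epsilon-(2+\theta/\alpha)}$ by choosing $m=m(\epsilon)$ large enough, so that $(x\log N)^{-(\lambda+m)}$ is negligible whenever $x\log N\geq 2$.

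The main obstacle will be uniform control of the constants $C_k$ as $k$ grows. Inspecting the proof of Lemma~\ref{lem:thetaestimate}, each $C_k$ depends on $D_\gamma$ from Lemma~\ref{lem:mainbege}, which is finite only for $\gamma$ strictly less than $(k+1)\alpha+\theta$. The $\epsilon$ in the final bound is precisely the slack needed to choose $\gamma$ strictly below this threshold at every step of the iteration: it does not affect the leading $k=1$ term (whose exponent is exactly $-(2+\theta/\alpha)$) but only the tail of the iteration, which is in any case subdominant. Finally, the $\Gamma$-ratios appearing in $C_k$ grow only polynomially in $k$ (of order $k^{1-\alpha}$), so the geometric series converges for $x\log N>1$, completing the argument.
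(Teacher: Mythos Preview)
Your reduction in the first paragraph is correct: indeed
\[
\P\big(\theta^N_{(1)}>x\big)-\P\big(\theta^N_1>x\big)\leq\P\Big(\max_{j\geq 2}\theta^N_j>x\Big)\leq\P\big(\widetilde\theta^{N-1}_{(1)}>x\big)
\]
for the $\mathrm{PD}(\alpha,\alpha+\theta)$ system, and the first summand $\P(\widetilde\theta^{N-1}_1>x)$ of your decomposition is controlled by Lemma~\ref{lem:thetaestimate} at the shifted parameters, with exponent exactly $-(2+\theta/\alpha)$. The gap is in the termination of the iteration. You assert that ``the residual $R_m$ is made smaller than $(x\log N)^{\epsilon-(2+\theta/\alpha)}$ by choosing $m$ large'', but at no point do you establish any bound on $R_m$ other than the trivial $R_m\leq 1$. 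Each step of the iteration merely transfers the problem to a residual of the same type at shifted parameters; knowing that the $m$-th \emph{summand} $C_m(x\log N)^{-(\lambda+m)}$ is small says nothing about the $m$-th \emph{remainder}, which is the probability of an event you have not yet controlled. The argument is circular as written.

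The paper closes this gap without any iteration, via a one-line observation you are missing: since $\sum_j V_j=1$, every $V_j\leq 1$, so $\theta^N_{(1)}>x$ forces $\Sigma_N<1/x$. Concretely, the paper notes that on $\{V_1>1/2\}$ one has $\theta^N_{(1)}=\theta^N_1$ automatically, while on $\{V_1\leq 1/2\}$ one has $\Sigma_N\geq (1-Y_1)^\alpha\Sigma'_N\geq 2^{-\alpha}\Sigma'_N$, hence $\{\theta^N_{(1)}>x,\,V_1\leq 1/2\}\subset\{\Sigma'_N<2^\alpha/x\}$. Lemma~\ref{lem:mainbege} applied at parameters $(\alpha,\alpha+\theta)$, with $\gamma$ chosen just below $2\alpha+\theta$ (this is where the $\epsilon$ actually enters), gives $C(x\log N)^{\epsilon-2-\theta/\alpha}$ in a single step. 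The same observation would also terminate your iteration immediately after the first reduction: $\P(\widetilde\theta^{N-1}_{(1)}>x)\leq\P(\Sigma'_{N-1}<1/x)$, and Lemma~\ref{lem:mainbege} finishes. With that line added your argument works --- but then the detour through Lemma~\ref{lem:thetaestimate} and the iteration become superfluous.
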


\begin{proof}
Notice that $\P\left( \theta^N_1 > x \right) \leq \P\big( \theta^N_{(1)} > x \big)$ and that $\theta^N_{(1)} = \theta^N_{1}$ if $V_1 >1/2$, thanks to $\sum V_i \equiv 1$. Therefore, splitting the events according to $V_1> 1/2$ and $V_1<1/2$ we obtain
\[
  \P\big( \theta^N_{(1)} > x \big) - \P\left( \theta^N_1 > x \right)
  = \! \P\left( \theta^N_{(1)} > x; V_1 \leq \frac{1}{2} \right)\! -
   \! \P\left( \theta^N_1 > x; V_1 \leq \frac{1}{2}  \right) \!
  \leq \P\left( \theta^N_{(1)}>x; \leq \frac{1}{2} \right).
\]
Since $0 < V_j < 1$ and $V_1 = Y_1$, we have
\begin{align*}
 \P\left( \theta^N_{(1)}>x, V_1 \leq 1/2 \right)
 &= \textstyle \P\left( \max_{j \leq N} V_j^{\alpha} > x \sum_{j=1}^N V_j^{\alpha} ; V_1 \leq 1/2 \right)\\
 &\leq \P\left( x^{-1}> Y_1^{\alpha} + (1-Y_1)^{\alpha} \Sigma'_N, Y_1 \leq 1/2 \right) \leq \P\left( \Sigma'_N < 2^\alpha/ x\right),
\end{align*}
where $\Sigma'_N = \sum V_j^{\alpha}/(1-Y_1)^{\alpha}$. By Lemma \ref{lem:mainbege}, we obtain that for all $N$ sufficiently large
\[
 \P\big( \theta^N_{(1)}>x, V_1 \leq 1/2 \big) \leq C(x \log N)^{\epsilon-1-\frac{\alpha+\theta}{\alpha}},
\]
which finishes the proof.
\end{proof}

We now study the asymptotic behavior of $\theta^N_{(2)}$, the second maxima in $\{\theta_1^N, \ldots, \theta_N^N \}$.
\begin{lemma}
\label{lem:secondMaximum}
For all $\epsilon \in (0,1]$, there exists $C>0$ such that for any $x \in (0,1)$ and $N \in \N$,
\[
 \P\left( \theta^N_{(2)} > x \right) < C(x \log N)^{\epsilon-2-\theta/\alpha}.
\]
\end{lemma}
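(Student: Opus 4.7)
The plan is to decompose $\{\theta^N_{(2)} > x\}$ according to whether $V_1 > 1/2$ or $V_1 \leq 1/2$, in the spirit of the proof of Lemma~\ref{lem:firstMaximum}. In both parts the problem reduces to an estimate for the primed process $(V_j')_{j\geq 1}$, which by Remark~\ref{rem:changeofparameter} is a size-biased pick from PD$(\alpha,\alpha+\theta)$ independent of $Y_1$; the previously established bounds applied to this primed process (with $\theta$ replaced by $\alpha+\theta$, hence $\lambda$ replaced by $2+\theta/\alpha$) automatically provide the extra factor of $(x\log N)^{-1}$ that separates the target from the first-maximum bound in Lemmas~\ref{lem:thetaestimate}--\ref{lem:firstMaximum}.

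On $\{V_1 > 1/2\}$ one has $V_1 > 1 - Y_1 \geq V_j$ for every $j \geq 2$, so $V_1 = V_{(1)}$ and $V_{(2)} = \max_{j \geq 2} V_j = (1-Y_1) V_{(1)}'$, where $V_{(1)}'$ denotes the maximum of the primed values. Writing $\Sigma_N = Y_1^\alpha + (1-Y_1)^\alpha \Sigma_N'$ (with $\Sigma_N' = \sum_{j=1}^{N-1} (V_j')^\alpha$) and discarding the nonnegative term $xY_1^\alpha$, the inequality $V_{(2)}^\alpha > x\Sigma_N$ forces $(V_{(1)}')^\alpha > x \Sigma_N'$. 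Combining Lemmas~\ref{lem:thetaestimate} and~\ref{lem:firstMaximum} applied to the primed process then gives a bound of order $(x\log N)^{-(2+\theta/\alpha)}$, which for $x\log N \geq 1$ is even stronger than the target.

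On $\{V_1 \leq 1/2\}$ the crucial observation is the always-valid pointwise bound $V_{(2)} \leq 1/2$, a consequence of $\sum_j V_j = 1$ together with $V_{(1)} \geq V_{(2)}$; hence $V_{(2)}^\alpha \leq 2^{-\alpha}$. Coupled with $\Sigma_N \geq (1-Y_1)^\alpha \Sigma_N' \geq 2^{-\alpha} \Sigma_N'$ (using $Y_1 \leq 1/2$), the condition $V_{(2)}^\alpha > x\Sigma_N$ forces $\Sigma_N' < 1/x$. Applying Lemma~\ref{lem:mainbege} to the primed process with exponent $\gamma' = 2\alpha+\theta-\alpha\epsilon \in (0,2\alpha+\theta)$ and $u = 1/(x\log N)$ yields $\P(\Sigma_N' < 1/x) \leq C(x\log N)^{\epsilon-2-\theta/\alpha}$, matching the target exactly.

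Summing the two contributions proves the claim for $x\log N$ bounded below, and for $x\log N$ small the inequality is trivial after enlarging $C$. The only real conceptual novelty beyond Lemma~\ref{lem:firstMaximum} is the observation that $V_{(2)} \leq 1/2$ plays here the role that the crude bound $V_{(1)} \leq 1$ played there; this slight pointwise tightening, combined with the parameter shift $\theta \mapsto \alpha+\theta$ when invoking Lemma~\ref{lem:mainbege} on the primed process, is precisely what produces the extra exponent of $-1$ (relative to $\lambda = 1+\theta/\alpha$) in the conclusion.
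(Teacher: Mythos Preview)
Your argument is correct, and for the case $V_1\le 1/2$ it coincides with the paper's: both bound $\P(\theta^N_{(2)}>x,\,V_1\le 1/2)$ by $\P(\Sigma'_N<C/x)$ for the primed PD$(\alpha,\alpha+\theta)$ process and invoke Lemma~\ref{lem:mainbege} with the shifted parameter (so that the admissible exponent reaches $2+\theta/\alpha-\epsilon$). Your remark that $V_{(2)}\le 1/2$ is the ``conceptual novelty'' is slightly overstated: the simpler inequality $\theta^N_{(2)}\le\theta^N_{(1)}$ together with the estimate already established inside the proof of Lemma~\ref{lem:firstMaximum} gives the same exponent; the gain really comes from the parameter shift $\theta\mapsto\alpha+\theta$, not from the sharper pointwise bound on $V_{(2)}$.

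Where you genuinely differ is on $\{V_1>1/2\}$. The paper subdivides this event further according to $V_2\gtrless 1/3$ and, in both subcases, passes to the \emph{double}-primed process $\Sigma''_N=(1-V_1-V_2)^{-\alpha}\sum_{j\ge 3}V_j^\alpha$ (size-biased PD$(\alpha,2\alpha+\theta)$), bounding everything by $\P(\Sigma''_N\le C/x)$. Your route is shorter: on $\{V_1>1/2\}$ you note $V_{(2)}=(1-Y_1)V'_{(1)}$ and, discarding the nonnegative term $xY_1^\alpha$, reduce $\{\theta^N_{(2)}>x\}$ to $\{(V'_{(1)})^\alpha>x\Sigma'_{N-1}\}$, i.e.\ to the first-maximum event for the primed process; a single application of Lemmas~\ref{lem:thetaestimate}--\ref{lem:firstMaximum} with $\theta\mapsto\alpha+\theta$ (so $\lambda\mapsto 2+\theta/\alpha$) then finishes. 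This avoids the second parameter shift and the extra case split on $V_2$, at the cost of invoking Lemmas~\ref{lem:thetaestimate}--\ref{lem:firstMaximum} for a second parameter pair rather than only Lemma~\ref{lem:mainbege}.
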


\begin{proof}
We basically use the same method as in the previous lemma
\begin{align*}
 \P & \big( \theta^N_{(2)} > x \big) \\
 & = \P\big( \theta^N_{(2)} > x, V_1 \leq 1/2 \big) + \P\big( \theta^N_{(2)}>x, V_1 > 1/2, V_2 < 1/3 \big) + \P\big( \theta^N_{(2)} > x, V_1 > 1/2, V_2>1/3 \big)\\
 &\leq \P\big( \theta^N_{(1)} > x, V_1 \leq 1/2 \big) + \P\big( \theta^N_{(2)} > x, V_1 > 1/2, V_2 < 1/3 \big) + \P\big( \theta^N_2 > x \big).
\end{align*}
By Lemma~\ref{lem:firstMaximum}, we have that $\P\big( \theta^N_{(1)} > x, V_1 \leq 1/2 \big) \leq C (x \log N)^{\epsilon-2-\theta/\alpha}$, so the same arguments used in Lemma \ref{lem:thetaestimate} yield
\[
 \P\big( \theta^N_2 > x \big) 
 = \P\big( V_2^{\alpha}(1-x) - x V_1^{\alpha} > x (1-V_1-V_2)^{\alpha} \Sigma''_N \big)
 \leq C (x \log N)^{\epsilon -2-\theta/\alpha},
\]
with $\Sigma''_N :=(1-V_1-V_2)^{-\alpha} \sum_{j=3}^N V_j^{\alpha}$. Moreover, $\Sigma''_N $ is independent of $(V_1,V_2)$ and 
\begin{align*}
\P\left( \theta^N_{(2)} > x, V_1 > 1/2, V_2 < 1/3 \right)
 & = \P\left( \max_{2 \leq j \leq N} V_j^{\alpha} > x \left(V_1^{\alpha} + V_2^{\alpha} + (1-V_1-V_2)^{\alpha} \Sigma''_N \right) \right)\\
 &\leq \P\left( \Sigma''_N \leq C/x \right) \leq C(x \log N)^{\epsilon-2-\theta/\alpha},\qquad \qquad
\end{align*}
concluding the proof.
\end{proof}

\begin{proof}[Proof of Theorem \ref{thm:mainbege}]
Given $\varepsilon > 0$, Lemma \ref{lem:firstMaximum} says that there exists $C>0$ such that 
\[
 L_N \P\big( \theta^N_{(1)} > x \big)- L_N \P\left( \theta^N_1 > x\right) \leq C(\log N)^{\epsilon - 1} x^{-\lambda},
 \quad \text{for all $x \in (0,1)$}.
\]
Therefore, by Lemma \ref{lem:thetaestimate} we have that
\begin{equation}
 \label{eqn:theta1}
 \begin{split}
\lim_{N \to +\infty} L_N \P\big( \theta^N_{(1)} > x \big) = \frac{1}{\lambda \Gamma(\lambda)\Gamma(2-\lambda)}\left(\frac{1-x}{x}\right)^{\lambda} & \quad \text{and}\\
 & \sup_{N \in \N} L_N \P\big( \theta^N_{(1)} > x \big) \leq C x^{-\lambda}.
 \end{split}
\end{equation}
We obtain from Lemma \ref{lem:secondMaximum} that
\begin{equation}
 \label{eqn:theta2}
 L_N \E\left[ \theta^N_{(2)} \right] = \int_0^1 L_N \P\big( \theta^N_{(2)}>x \big) \dx \leq (\log N)^{\epsilon - 1} \int_0^1 x^{\epsilon - 1 - \lambda} \dx \underset{N \to +\infty}{\longrightarrow} 0,
\end{equation}
which implies that $\Pi^N(t)$ satisfies \eqref{eqn:convergence}.

Assume now that $\theta \in (-\alpha,\alpha)$, so that $\lambda \in (0,2)$, then \eqref{eqn:theta1} yields
\[
 \int_0^1 x \sup_{N \in \N} L_N \P\big( \theta^N_{(1)} > x \big) \dx \leq C \int_0^1 x^{1-\lambda} \dx < +\infty.
\]
Therefore, the assumptions of Lemma~\ref{lem:criteria:lambda:coalescent} are satisfied implying that $\Pi^N(t/c_N)$ converges in distribution to the Beta$(2-\lambda,\lambda)$-coalescent and that $c_N L_N \sim (1 - \theta/\alpha)$ as $N\to \infty$.
On the other hand if $\theta \geq \alpha$, we have that $\int_0^1 x \left(\frac{1-x}{x}\right)^\lambda \dx = +\infty$. With $k \geq \lambda$, we obtain from (\ref{eqn:theta1}) that 
\[
 \int_0^1 x^k \sup_{N \in \N} L_N \P\big( \theta^N_{(1)} > x \big) \dx \leq C \int_0^1 x^{k-\lambda} \dx < +\infty.
\]
We apply Lemma \ref{lem:converngence:kingman} to conclude that $\Pi^N(t/c_N)$ converges weakly to the Kingman's coalescent.
\end{proof}

\section{Poisson-Dirichlet representation of the \texorpdfstring{$(N,\beta)$}{(N,B)}-branching random walk}
\label{sec:piy}

In this section, we explore the relations between the $(N,\beta)$-BRW and the PD$(\beta^{-1},0)$ distribution to show Theorems~\ref{thm:main_vel} and~\ref{thm:main_coal} in the case where $\beta < \infty$. The case $\beta = \infty$ is also studied, but using different methods.

\begin{proposition}
\label{prop:def2}
With $\beta \in (1, \infty)$, let $(U_n)_n$ be a PD$(\beta^{-1},0)$, $L = \lim_{n \to +\infty} n^{-\beta} U_n^{-1}$ and $(V_n)_n$ be its size-biased pick, then
\[
 \left(X^N_1(j)-X^N_0(\eq), j \leq N\right) \egaldistr \left( \tfrac{1}{\beta} \log V_j + \tfrac{1}{\beta} \log L \right) ,
\]
in particular, $ X^N_1(\eq)-X^N_0(\eq) \egaldistr \log \sum_{j=1}^N V_j^{1/\beta} + \frac{1}{\beta} \log L.$
\end{proposition}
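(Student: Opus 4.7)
The plan is to combine Proposition \ref{prop:def} with Proposition \ref{prop:PD-PPP} to recognize the selection step as a size-biased sampling from a Poisson--Dirichlet distribution, and then translate back to the variables $X^N_1(j) - X^N_0(\eq)$.

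First, I would apply Proposition~\ref{prop:def} at $t=0$ to obtain that $\Delta_1(1) > \Delta_2(1) > \cdots$ satisfy: the shifted sequence $(\Delta_k(1) - X^N_0(\eq))_{k \in \N}$ is the decreasing enumeration of the atoms of a PPP($\e^{-x}\dx$). Applying Proposition~\ref{prop:PD-PPP} to this point process with parameter $\beta$, I define
\[
L \;:=\; \sum_{k=1}^{+\infty} \e^{\beta(\Delta_k(1) - X^N_0(\eq))} \;=\; \e^{-\beta X^N_0(\eq)} \sum_{k=1}^{+\infty} \e^{\beta \Delta_k(1)},
\qquad U_k \;:=\; \frac{\e^{\beta(\Delta_k(1) - X^N_0(\eq))}}{L},
\]
so that $(U_k)$ is a PD$(\beta^{-1},0)$ sequence with $\lim n^\beta U_n = 1/L$ a.s. Note that $L$ is finite since $\beta > 1$ and is exactly the normalization factor appearing in \eqref{equa.def.N,b,BRW} (up to the common factor $\e^{-\beta X^N_0(\eq)}$).

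Next, I would re-read the selection mechanism \eqref{equa.def.N,b,BRW}: the probability that $X^N_1(i) = \Delta_{k}(1)$, given the previously selected indices, equals
\[
\frac{\e^{\beta \Delta_k(1)} \ind{\Delta_k(1) \not\in \{X^N_1(1),\ldots,X^N_1(i-1)\}}}{\sum_{j=1}^{+\infty} \e^{\beta\Delta_j(1)} - \sum_{\ell=1}^{i-1} \e^{\beta X^N_1(\ell)}}.
\]
Dividing numerator and denominator by $\e^{\beta X^N_0(\eq)} L$, and writing $V_i := \e^{\beta(X^N_1(i) - X^N_0(\eq))}/L$ for the normalized weight of the $i$-th selected particle, this probability becomes $U_k \ind{U_k \notin \{V_1,\ldots,V_{i-1}\}}/(1 - V_1 - \cdots - V_{i-1})$. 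Comparing with the definition of the size-biased pick given just after Definition~\ref{PD_def}, this is precisely the conditional law of the $i$-th term in the size-biased pick of $(U_k)$. Hence $(V_1,\ldots,V_N)$ has the same distribution as the first $N$ terms of a size-biased pick of a PD$(\beta^{-1},0)$.

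Taking logarithms, $X^N_1(j) - X^N_0(\eq) = \beta^{-1} \log V_j + \beta^{-1} \log L$, which is the first claim. For the second claim I would simply compute
\[
X^N_1(\eq) - X^N_0(\eq) \;=\; \log \sum_{j=1}^N \e^{X^N_1(j) - X^N_0(\eq)} \;=\; \log \sum_{j=1}^N V_j^{1/\beta} + \tfrac{1}{\beta} \log L,
\]
using that the additive constant $\beta^{-1}\log L$ factors out of the sum as $L^{1/\beta}$. I do not expect a serious obstacle here: the only subtle point is to verify that the joint distribution of $(V_1,\ldots,V_N)$ coincides with the size-biased pick from $(U_k)$ and that the coupling to $L$ is the one given by Proposition~\ref{prop:PD-PPP}; once the selection rule is rewritten in the displayed form, both follow immediately.
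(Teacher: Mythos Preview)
Your proof is correct and follows essentially the same route as the paper's: invoke Proposition~\ref{prop:def} to recognize the shifted points as a PPP($\e^{-x}\dx$), apply Proposition~\ref{prop:PD-PPP} to define $L$ and $(U_k)$, observe that the selection rule \eqref{equa.def.N,b,BRW} is exactly the size-biased pick, and invert to recover the claimed identities. The only difference is presentational --- you spell out the normalization by $\e^{\beta X^N_0(\eq)}L$ more explicitly than the paper does.
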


\begin{proof}
By Proposition~\ref{prop:def}, $(x_k, k \geq 1 ) := \mathrm{Rank}\left(\left\{ X^N_0(j) + p - X^N_0(\eq), p \in \mathcal{P}_1(j), j \leq N \right\} \right)$ is the ordered points of a PPP$(\e^{-x} \dx)$. With
$L = \sum_{j=1}^{+\infty} \e^{\beta x_j}$ and $\quad U_j = \e^{\beta x_j}/L$, we know from 
Proposition~\ref{prop:PD-PPP} that $(U_j, j \geq 1)$ is a PD$(\beta^{-1},0)$ and that $\lim_{n \to +\infty} n^\beta U_n = L^{-1}$. 
By the definition of the $(N,\beta)$-BRW, $V_j := \e^{\beta (X^N_1(j)-X^N_0(\eq))}/L$ is the $j$th particle sampled in the size-biased pick from $(U_n)_n$. Inversing the equation, we conclude that
\[
 X^N_1(j) - X^N_0(\eq) = \tfrac{1}{\beta}\left(\log V_j + \log L\right),
\]
proving the first statement. The second statement follows from the definition of $X_1^N(\eq)$. 
\end{proof}

To study the case $\beta = +\infty$, we use the following representation of $N$ rightmost points of a PPP($\e^{-x}\dx$).
\begin{proposition}
\label{prop:betainfini}
Let $x_1 > x_2 >\ldots > x_N$ the $N$ rightmost points of a PPP($\e^{-x}\dx$), then
\[
 ( x_1, \ldots, x_N ) \egaldistr \mathrm{R}\mathrm{ank}\{ Z_N+e_1, \ldots, Z_N + e_N \},
\]
where $(e_j)$ are i.i.d exponential random variables with mean $1$ and $Z_N$ is an independent random variable satisfying $\P(Z_N \in \dx) = \frac{1}{N!}\exp(-(N+1)x - \e^{-x}) \dx$.
\end{proposition}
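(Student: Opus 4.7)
The plan is to prove the identity in distribution by computing the joint density of the $N$-tuple of ordered points on both sides of the equality and verifying that they agree. This is cleaner than trying to couple the two constructions directly, and it avoids having to separately invoke the memoryless property for the conditional structure.

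For the left-hand side, I would use the standard description of the joint density of the top order statistics of a Poisson point process. At a configuration $y_1 > y_2 > \cdots > y_N$, the density is obtained by requiring a point of the PPP at each $y_i$ (contributing $\e^{-y_i}$ from the intensity $\e^{-x}\dx$) together with no further point above $y_N$ (contributing the exponential of the integrated intensity $\exp(-\int_{y_N}^{\infty} \e^{-x}\dx) = \exp(-\e^{-y_N})$). This yields
\[
f_{\mathrm{LHS}}(y_1,\ldots,y_N) = \exp\!\Big(-\e^{-y_N}-\sum_{i=1}^N y_i\Big).
\]

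For the right-hand side, by independence the joint density of $(Z_N,e_1,\ldots,e_N)$ is
\[
\tfrac{1}{N!}\exp\!\big(-(N+1)z - \e^{-z}\big)\prod_{i=1}^N \e^{-e_i}\ind{e_i > 0}.
\]
I would change variables to $w_i = z + e_i$, so that the joint density of $(Z_N,w_1,\ldots,w_N)$ at a point with $w_i > z$ simplifies to $\tfrac{1}{N!}\exp(-z-\e^{-z}-\sum_i w_i)$. Marginalizing $z$ over $(-\infty,\min_i w_i)$ using the primitive identity $\int_{-\infty}^{a} \e^{-z-\e^{-z}}\dx = \e^{-\e^{-a}}$ (obtained via the substitution $u = \e^{-z}$), one gets for the unordered tuple $(w_1,\ldots,w_N)$ the density $\tfrac{1}{N!}\exp(-\e^{-\min_i w_i}-\sum_i w_i)$. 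Ordering the $w_i$'s decreasingly introduces a combinatorial factor of $N!$, and the resulting density on $\{y_1>\cdots>y_N\}$ is exactly $f_{\mathrm{LHS}}$.

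There is no real obstacle: the main technical ingredient is the primitive computation of $\int_{-\infty}^a \e^{-z-\e^{-z}}\dx$, which also serves as a sanity check that $Z_N$ is indeed a probability density (the full integral equals $\Gamma(N+1)/N! = 1$). The only point that deserves a line of comment is the PPP joint-density formula for the top $N$ order statistics; this can either be quoted as standard or proved in one line by decomposing the event $\{x_i \in [y_i, y_i+\mathrm{d}y_i]\}$ according to disjoint infinitesimal intervals and the empty complement above $y_N$.
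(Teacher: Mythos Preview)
Your proof is correct. You compute both joint densities explicitly and match them; each step (the order-statistics density for the PPP, the change of variables $w_i = z+e_i$, the Gumbel-type primitive $\int_{-\infty}^a \e^{-z-\e^{-z}}\,\d z = \e^{-\e^{-a}}$, and the $N!$ from ordering) is accurate.

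The paper takes a different, more structural route. It maps the PPP$(\e^{-x}\dx)$ to a homogeneous PPP on $\R_+$ via $x \mapsto \e^{-x}$ and then \emph{identifies} $Z_N$ concretely as $x_{N+1}$, the $(N{+}1)$th rightmost point. The argument is that $\e^{-x_{N+1}}$ is Gamma$(N{+}1,1)$ distributed, and conditionally on $x_{N+1}$ the points $\e^{-x_1},\ldots,\e^{-x_N}$ are the order statistics of $N$ i.i.d.\ uniforms on $[0,\e^{-x_{N+1}}]$; taking minus logarithms turns uniforms into exponentials and yields the representation. Your approach is more self-contained (no appeal to Gamma arrival times or the uniform conditioning property of Poisson processes) and, as you note, checks en route that the given $Z_N$-density integrates to $1$. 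The paper's approach buys an actual coupling rather than a mere distributional identity: it explains where the density $\tfrac{1}{N!}\exp(-(N{+}1)x-\e^{-x})$ comes from (push-forward of Gamma$(N{+}1,1)$ under $y\mapsto -\log y$) and makes the independence between $Z_N$ and the $e_j$ transparent.
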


\begin{proof}
It is an elementary result about PPP that 
\[\textstyle
\sum_{j=1}^{+\infty} \delta_{\e^{-x_j}} \egaldistr \text{PPP($\dx$) on $\R_+$}
\quad \text{and that} \quad 
\e^{-x_{N+1}}\egaldistr \text{Gamma$(N+1,1)$},
\] 
moreover, conditionally on $x_{N+1}$, $(\e^{-x_{1}}, \ldots \e^{-x_n})$ are the ranked values of $N$ i.i.d. uniform random variables on the interval $[0,\e^{-x_{N+1}}]$. Setting $Z_N= x_{N+1}$ and $U_1, \ldots, U_N$ i.i.d. uniform random variables 
\[
 (\e^{-x_1}, \ldots, \e^{-x_N}) \egaldistr \mathrm{R}\mathrm{ank}\{\e^{-Z_{N}} U_1, \ldots, \e^{-Z_{N}} U_N \}.
\]
It is straightforward that $Z_N$ and $(x_1, \ldots, x_N)$ satisfy the desired properties. 
\end{proof}

We first use these results to compute the asymptotic behavior of the speed of the $(N,\beta)$-BRW. 

\begin{proof}[Proof of Theorem \ref{thm:main_vel}]
Lemma~\ref{lem:speed} says that \eqref{equa:lem:speed} holds with $v_{N,\beta} = \E\left( X^N_1(\eq)-X^N_0(\eq)\right)$. Thus, if $\beta < \infty$ Proposition~\ref{prop:def2} yields
\[
 v_{N,\beta} =\E\left( X^N_1(\eq)-X^N_0(\eq)\right) = \E\left( \log \left( \sum_{j=1}^N V_j^{1/\beta} \right) \right) + \frac{1}{\beta} \E\left( \log L \right).
\]
By Lemma \ref{lem:mainbege}, $(\log N)^{-1}\sum_{j=1}^N V_j^{1/\beta}$ converges to $\Psi_{\beta^{-1}} M_\infty^{1/\beta}$ a.s. and in $\mathbb{L}^1$ as $N\to \infty$. Therefore, the logarithm of this quantity converges a.s. as well. We notice from Lemma~\ref{lem:mainbege} that 
\[
 \P\left(\log\left( (\log N)^{-1}\sum_{j=1}^N V_j^{1/\beta} \right) \leq -u \right) = \P\left( \sum_{j=1}^N V_j^{1/\beta} \leq \e^{-u} \log N\right) \leq D_{2/\beta} \e^{-2u},
 \quad \text{for all $u>0$}.
\]
The $\mathbb{L}^1$ convergence of $(\log N)^{-1}\sum_{j=1}^N V_j^{1/\beta}$ implies the existence of a constant $K$ such that
\[
 \P\left(\log\left( (\log N)^{-1}\sum_{j=1}^N V_j^{1/\beta} \right) \geq u \right) \leq \P\left( \sum_{j=1}^N V_j^{1/\beta} \geq \e^{u} \log N \right) \leq K \e^{-u}
\quad \text{for all } u \geq 0.
\]
In particular, $(\log \sum_{j=1}^N V_j^{1/\beta} -\log \log N)$ is uniformly integrable, which implies its $\mathbb{L}^1$ convergence. 
We know from Corollary \ref{cor:identification} that $\Psi_{\beta^{-1}} M_\infty^{1/\beta} = L^{1/\beta}$, and hence that 
\begin{align*}
 \lim_{N\to \infty} v_{N,\beta} - \log \log N
 &= \lim_{N\to \infty} \E\left[ \log \frac{ \sum_{j=1}^N V_j^{1/\beta}}{\log N} \right] + \frac{\E\left[ \log L \right]}{\beta}\\ 
 &= \E\left[ \log\left(\Psi_{\beta^{-1}} M_\infty^{1/\beta}\right) \right] + \frac{\E\left[ \log L \right]}{\beta} = 0.
\end{align*}

For $\beta = \infty$ we follow the ideas from \cite{BDMM07} and use Laplace methods to estimate the asymptotic mean of $X^N_1(\eq)$. Let $(x_1,\ldots x_N)$ be the $N$ largest atoms of a PPP($\e^{-x}\dx$), and define for $\lambda > 0$ 
\[
 \Lambda(\lambda) := \E\left( \exp\left(-\lambda \log \sum_{k=1}^N \e^{x_k} \right) \right) = \E\left(\left( \sum_{k=1}^N \e^{x_k}\right)^{-\lambda}\right).
\]
By Proposition \ref{prop:betainfini}, we have
$
\Lambda(\lambda) = \E\left(\e^{-\lambda Z}\right) \E \left( \left( \sum_{k=1}^N \e^{e_k} \right)^{-\lambda}\right),
$
where $(e_k)$ are i.i.d exponential random variables and $\exp (-Z_N)$ has Gamma$(N+1,1)$ distribution. Notice that the following equalities hold: $\E\left(\e^{-\lambda Z}\right) = \frac{\Gamma(N+1+\lambda)}{\Gamma(N+1)}$ and
\begin{equation}
 \E \left( \left( \sum_{k=1}^N \e^{e_k} \right)^{-\lambda}\right)
 = \frac{1}{\Gamma(\lambda)}\int_0^{+\infty} t^{\lambda-1} \E\left( \e^{-t \sum_{k=1}^N \e^{e_k}} \right) \d t
 = \frac{1}{\Gamma(\lambda)}\int_0^{+\infty} t^{\lambda-1} I_0(t)^N \d t \label{eqn:momentFacile},
\end{equation}
with $I_0(t) = \E(\e^{-t\e^{e_1}})$ the Laplace transform of $\e^{e_1}$.
The function $I_0$ can be represented using the exponential integral $\mathrm{Ei}$
we have $I_0(x) = x \mathrm{Ei}(-x) + e^{-x}$. Therefore, there exists $K>0$ such that 
\[
 |I_0(x) - 1 - x\log x| \leq K x,
 \quad \text{for any $x \geq 0$.}
\]
In particular for $x = t/(N \log N)$ we have $|I_0(t/(N\log N)) - 1 - t/N| \leq \frac{K t}{N \log N}$. Thus, \eqref{eqn:momentFacile} yields
\begin{align*}
 \int_0^{+\infty} t^{\lambda-1} I_0(t)^N\d t
 &= \frac{1}{(N \log N)^\lambda} \int_0^{+\infty} t^{\lambda - 1} I_0(t/(N\log N))^N \d t\\
 &\leq \frac{1}{(N \log N)^\lambda} \int_0^{+\infty} t^{\lambda - 1} \e^{- t(1 - \frac{K}{\log N})} \d t \leq \frac{\Gamma(\lambda)}{(N \log N)^\lambda} (1 - \tfrac{K}{\log N})^{-\lambda}.
\end{align*}
The same argument with the obvious change gives a similar lower bound, which implies
\[
 \Lambda(\lambda) = \frac{\Gamma(N+1+\lambda)}{(N \log N)^\lambda\Gamma(N+1)}(1 + O((\log N)^{-1}) ),
\]
uniformly in $\lambda \in [0,1]$, therefore $\log \Lambda(\lambda) = \lambda \log \log N + O((\log N)^{-1})$. As a consequence
\[
 \E\left(\log \sum_{k=1}^N \e^{x_k}\right) = \lim_{\lambda \to 0} \frac{\log \Lambda(\lambda)}{\lambda} = \log \log N + o(1),
\]
which concludes the proof.
\end{proof}

In a similar way, we obtain the genealogy of the $(N,\beta)$-branching random walk.

\begin{proof}[Proof of Theorem \ref{thm:main_coal}] 
If $\beta \in (1,\infty)$, then Lemma \ref{lem:coal} and Proposition \ref{prop:def2} say that the genealogy of the $(N,\beta)$-BRW can be described by \ref{thm:mainbege.beta}. in Theorem \ref{thm:mainbege}, with $\alpha = \frac{1}{\beta}$ and $\theta = 0$. Therefore, it converges to the Bolthausen-Sznitman coalescent.

On the other hand if $\beta = \infty$, the genealogy of the $(N,\infty)$-BRW is again described by a coalescent process obtained from multinomial random variables. In this case, by Proposition~\ref{prop:betainfini} we can rewrite the coefficients $\eta^N_j$ as
\[
\eta^N_j = \frac{\e^{e_j}}{\sum_{i=1}^{N} \e^{e_i}}, \quad
 \text{with $e_1,\ldots, e_N$ i.i.d. exponential random variables}.
\] 
Thanks to $\P(\e^{e_j} \geq x) = x^{-1}$, \cite[Theorem 1.2 (c)]{Cor14} says that the genealogy of the  $(N,\infty)$-BRW converges to the Bolthausen-Sznitman coalescent with $c_N \sim N$ as $N\to \infty$.
\end{proof}

\label{Bibliography}
\bibliographystyle{plain}

\end{document}